\documentclass[12pt]{amsart}

\usepackage[latin1]{inputenc}
\usepackage[arrow, matrix, curve]{xy}
\usepackage{amsmath}
\usepackage{ngerman}
\usepackage{amscd}
\usepackage{a4wide}
\usepackage{amsfonts}
\usepackage{amssymb}
\usepackage{amsthm}
\usepackage[T1]{fontenc}
\usepackage[latin1]{inputenc}
\usepackage[english]{babel}

\numberwithin{equation}{section}
\setcounter{tocdepth}{2}

\newtheorem{theorem}{Theorem}[section]
\newtheorem{lemma}[theorem]{Lemma}
\newtheorem{keylemma}[theorem]{Key lemma}
\newtheorem{proposition}[theorem]{Proposition}
\newtheorem{corollary}[theorem]{Corollary}

\theoremstyle{definition}

\newtheorem{remark}[theorem]{Remark}

\newtheorem{claim}[theorem]{Claim}
\newtheorem{conj}[theorem]{Conjecture}

\newcommand{\Z}{\mathbb{Z}}

\renewcommand{\O}{\mathcal{O}}
\renewcommand{\r}{\rightarrow}

\title{On a base change conjecture for higher zero-cycles}
\author{Morten L\"uders}
\address{Fakult\"at f\"ur Mathematik, Universit\"at Regensburg, 93040 Regensburg, Germany}
\email{morten.lueders@mathematik.uni-regensburg.de}

\begin{document}
\begin{abstract}
We show the surjectivity of a restriction map on higher $(0,1)$-cycles for a smooth projective scheme over an excellent henselian discrete valuation ring. This gives evidence for a conjecture stated in \cite{KEW} saying that base change holds for such schemes in general for motivic cohomology in degrees $(i,d)$ for fixed $d$ being the relative dimension over the base. Furthermore, the restriction map we study is related to a finiteness conjecture for the $n$-torsion of $CH_0(X)$, where $X$ is a variety over a $p$-adic field.
\end{abstract}

\maketitle
\section{Introduction}
Let $\mathcal{O}_K$ be an excellent henselian discrete valuation ring with quotient field $K$ and residue field $k=\mathcal{O}_K/\pi\mathcal{O}_K$ and always assume that $1/n\in k$.
Let $X$ be a regular scheme, flat and projective over Spec$\mathcal{O}_K$ of fibre dimension $d$. Let $X_K$ denote the generic fibre and $X_0$ the reduced special fibre. Let $\Lambda=\mathbb{Z}/n\mathbb{Z}$.

In \cite[Cor. 9.5]{SS} and \cite[App.]{EWB} it is shown that for $X\rightarrow \text{Spec}\mathcal{O}_K$ smooth and projective and $k$ finite or algebraically closed, the restriction map
$$CH_1(X)_{\Lambda}\xrightarrow{\simeq} CH_0(X_0)_{\Lambda}$$
is an isomorphism of Chow groups with coefficients in $\Lambda$. This result is reproven in \cite{KEW} for more general residue fields and generalised to the case that $X_0$ is a simple normal crossings divisor. In that case one needs to replace $CH_0(X_0)$ by $H^{2d}_{cdh}(X_0,\mathbb{Z}/n\mathbb{Z}(d))$, i.e. the hypercohomology of the motivic complex $\mathbb{Z}/n\mathbb{Z}(d)$ in the cdh-topology, which is isomorphic to $CH_0(X_0)$ for $X_0/k$ smooth. The result then says that if $k$ is finite, or algebraically closed, or $(d-1)!$ prime to $m$, or $A$ is of equal characteristic, or $X/\mathcal{O}_K$ is smooth with perfect residue field $k$, then there is an isomorphism
$$CH_1(X)_{\Lambda}\xrightarrow{\simeq} H^{2d}_{cdh}(X_0,\mathbb{Z}/n\mathbb{Z}(d))$$
which is induced by restricting a one-cycle in general position to a zero-cycle on $X_0^{sm}$. 
Generalising this result, the following conjecture is stated in section $10$ of \cite{KEW}: 
\begin{conj}\label{conjkew} The restriction homomorphism 
$$res:H^{i,d}(X,\mathbb{Z}/n\mathbb{Z})\rightarrow H^{i,d}_{cdh}(X_0,\mathbb{Z}/n\mathbb{Z})$$
is an isomorphism for all $i\geq 0$. \end{conj}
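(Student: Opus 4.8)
The plan is to split the range of $i$ according to which tools apply. Since $1/n\in k$, the Beilinson--Lichtenbaum conjecture (a theorem) gives $H^{i}(X,\Lambda(d))\cong H^{i}_{et}(X,\mu_{n}^{\otimes d})$ for $i\leq d$, with an injection for $i=d+1$, and likewise --- since $X_{0}$ is a simple normal crossings divisor, hence cdh-locally smooth --- $H^{i}_{cdh}(X_{0},\Lambda(d))\cong H^{i}_{et}(X_{0},\mu_{n}^{\otimes d})$ in the same range; proper base change over the henselian ring $\O_{K}$ identifies the two étale groups, so $\mathrm{res}$ is an isomorphism for $i\leq d$ and injective for $i=d+1$. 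Since both $H^{i}(X,\Lambda(d))$ and $H^{i}_{cdh}(X_{0},\Lambda(d))$ vanish for $i>2d$ (the former because Bloch's cycle complex is trivial in negative simplicial degrees, the latter because $\dim X_{0}=d$), only the window $d<i\leq 2d$ is at issue, and there the étale approach must fail --- already $H^{2d}_{cdh}(X_{0},\Lambda(d))=CH_{0}(X_{0})_{\Lambda}$ need not be finite --- so one is forced to argue with cycles.

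I would therefore represent $H^{i}(X,\Lambda(d))$ by Bloch's codimension-$d$ cycle complex $z^{d}(X,\bullet)$ with coefficients in $\Lambda$ and, using the simple normal crossings structure, represent $H^{i}_{cdh}(X_{0},\Lambda(d))$ by the total complex, over the nerve of the components, of the cycle complexes $z^{d}(X_{I},\bullet)$ of the smooth closed strata, moving cycles into each $X_{I}^{sm}$ as in \cite{KEW}. Because $X$ is regular and $X_{0}=\{\pi=0\}$ is a principal Cartier divisor, intersecting $X_{0}$ with a cycle on $X\times\Delta^{\bullet}$ that is flat over $\O_{K}$ and meets all faces properly yields a codimension-$d$ cycle on $X_{0}\times\Delta^{\bullet}$, and this realises $\mathrm{res}$. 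The base case $i=2d$ is then the isomorphism $CH_{1}(X)_{\Lambda}\simeq H^{2d}_{cdh}(X_{0},\Lambda(d))$ of \cite{SS}, \cite{EWB} and \cite{KEW}. The first genuinely new case --- surjectivity for $i=2d-1$, i.e. for the higher $(0,1)$-cycles $CH^{d}(-,1)$ --- is the one I would establish here: given a cycle $W$ on $X_{0}^{sm}\times\Delta^{1}$ in good position, one must produce a cycle on $X\times\Delta^{1}$, flat over $\O_{K}$ and with proper faces, restricting to $W$ modulo a boundary. Componentwise this is the deformation of an integral curve in the special fibre to a surface flat over $\O_{K}$; I would spread out each component to its closure in $X\times\Delta^{1}$, discard the vertical part, correct the position of the faces by subtracting cycles supported over $\mathrm{Spec}\,K$, and finally check that the discrepancy on $X_{0}$ is a boundary, for which one cuts $W$ down by general hyperplane sections over $\O_{K}$ until only zero-cycles on the special fibre survive and applies $CH_{1}(X)_{\Lambda}\simeq CH_{0}(X_{0})_{\Lambda}$.

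The main obstacle is exactly this lifting. Over an excellent henselian discrete valuation ring there is no off-the-shelf moving lemma that simultaneously forces flatness over $\O_{K}$ and properness of intersection with the faces of $\Delta^{\bullet}$, so one must either refine the moving techniques of Bloch and Levine into such a relative statement while tracking the special fibre, or --- the route I expect to be workable --- carry out the reduction to zero-cycles by iterated general hyperplane sections over $\O_{K}$ carefully enough to land in the already-known case. Two secondary points are that the hypotheses on $k$ ($k$ finite or algebraically closed, $k$ perfect with $X/\O_{K}$ smooth, $(d-1)!$ prime to $n$, or equal characteristic) are precisely those under which cdh-motivic and étale cohomology may be compared on the strata of $X_{0}$, which I would import from \cite{KEW}, and that injectivity for $d<i<2d$ appears to need new input and would be left open --- which is why the result obtained is surjectivity in a single degree rather than the full conjecture.
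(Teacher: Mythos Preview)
The statement is labeled a \emph{conjecture} in the paper and is not proved there; the paper establishes only the surjectivity part for the single pair $(i,d)=(2d-1,d)$, and only under the additional hypothesis that $X/\O_K$ is smooth (Theorem~\ref{mt}). You conclude with the same admission, so your proposal and the paper agree in scope but not in method.

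Your framing via Beilinson--Lichtenbaum for $i\leq d$ and vanishing for $i>2d$ is reasonable orientation but plays no role in the paper, which works exclusively in the single degree $i=2d-1$; the comparison $H^i(X,\Lambda(d))\cong H^i_{\text{\'et}}(X,\mu_n^{\otimes d})$ over the Dedekind base is never invoked.

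For the degree $i=2d-1$ the two approaches diverge substantially. You propose to work directly in Bloch's simplicial cycle complex and to lift a cycle $W\subset X_0^{sm}\times\Delta^1$ to a flat cycle on $X\times\Delta^1$ with proper faces, correcting discrepancies by hyperplane sections; you correctly flag the absence of a relative moving lemma as the main obstacle, and you do not actually resolve it --- your ``spread out, discard the vertical part, correct the faces'' step is where the real content lies, and it is left as a wish. The paper sidesteps this obstacle entirely by passing to Rost's Gersten-type complex $C_*(X,m)$ with Milnor $K$-coefficients, identifying $CH^d(X,1)_\Lambda\cong A_2(X,-1)$ and $CH^d(X_0,1)_\Lambda\cong A_1(X_0,0)$. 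Surjectivity is then reduced to a Key Lemma: every $\xi\in\ker[Z_1(X)/n\to Z_0(X_0)/n]$ lifts along $\partial$ to an element of $\ker[C_2(X,-1)\to C_1(X_0,0)]$. The base case $d=1$ is proved by showing that the relative Picard group $\text{Pic}(X,X_0)$ is uniquely $n$-divisible, via formal GAGA and the filtration on $1+\pi\O_{X_m}$; the induction step embeds into $X\times_{\O_K}\mathbb{P}^N$ and uses an Altman--Kleiman Bertini theorem to produce smooth relative subschemes of fibre dimensions $1$ and $d-1$ containing the given curves. The decisive input is thus a divisibility statement for $\text{Pic}(X,X_0)$ together with a Bertini-over-$\O_K$ argument, not a moving lemma in the simplicial direction --- a mechanism your sketch does not anticipate.
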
 
Here $H^{i,d}(X,\mathbb{Z}/m\mathbb{Z})=H^{i}(X,\mathbb{Z}/m\mathbb{Z}(d))$ are the motivic cohomology groups for schemes over Dedekind rings defined in \cite{Sp}. In this article we consider the corresponding restriction map on higher Chow groups of zero-cycles with coefficients in $\Lambda$
$$res^{CH}:CH^d(X,2d-i)_{\Lambda}\xrightarrow{} CH^d(X_0,2d-i)_{\Lambda}$$
for $X/\mathcal{O}_K$ smooth which we define to be induced by the following composition:
$$res^{CH}:CH^n(X,m)\xrightarrow{}CH^n(X_K,m)\xrightarrow{\cdot(-\pi)}CH^{n+1}(X_K,m+1)\xrightarrow{\partial}CH^n(X_0,m).$$ 
Here $\cdot(-\pi)$ is the product with $-\pi\in CH^1(K,1)=K^{\times}$ defined in \cite[Sec. 5]{Bl}, $\pi$ is a local parameter for the discrete valuation on $K$ and $\partial$ is the boundary map coming from the localization sequence for higher Chow groups (see \cite{Le2}). We call the composition
$$sp_{\pi}^{CH}:CH^n(X_K,m)\xrightarrow{\cdot(-\pi)}CH^{n+1}(X_K,m+1)\xrightarrow{\partial}CH^n(X_0,m)$$
a specialisation map for higher Chow groups. We note that $res^{CH}$ does not depend on the choice of $\pi$ whereas $sp_{\pi}^{CH}$ does. For a detailed discussion of the specialisation map see also \cite[Sec. 3]{ADIKMP}.

Our main theorem is the following:
\begin{theorem}\label{mt}
Let $X/\mathcal{O}_K$ be smooth. Then the restriction map $$res^{CH}:CH^d(X,1)_{\Lambda}\twoheadrightarrow CH^d(X_0,1)_{\Lambda}$$ is surjective. This implies in particular the surjectivity part of conjecture \ref{conjkew} for the pair $(2d-1,d)$.
\end{theorem}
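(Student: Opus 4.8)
The plan is to prove the surjectivity by lifting explicit generators of $CH^d(X_0,1)_\Lambda$. Since $X/\O_K$ is smooth the special fibre $X_0$ is smooth and projective over $k$, and by the Gersten resolution of the Milnor $K$-theory sheaf together with Bloch's comparison of higher Chow groups with motivic cohomology one has $CH^d(X_0,1)\cong H^{d-1}(X_0,\mathcal K^M_d)$; thus every class is represented by a cycle $\xi_0=\sum_i(C_i,f_i)\in z^d(X_0,1)$ with $C_i\subset X_0$ integral curves, $f_i\in k(C_i)^\times$ and $\sum_i\operatorname{div}_{C_i}(f_i)=0$. On the other side, by the description of the specialisation map recalled in \cite[Sec.~3]{ADIKMP}, a cycle $\alpha_K\in z^d(X_K,1)$ whose closure $\overline{\alpha_K}\subset X$ is flat over $\O_K$ and meets $X_0$ and the faces of $\Delta^1$ properly satisfies $sp^{CH}_\pi([\alpha_K])=[\overline{\alpha_K}\cdot X_0]$. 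Hence it is enough, for each $\tau$, to produce a \emph{closed} cycle $\xi\in z^d(X,1)$ which is flat over $\O_K$, in general position with respect to $X_0$ and the faces, and whose intersection with $X_0$ equals a good representative of $\tau$ modulo degenerate cycles; then $res^{CH}([\xi])=sp^{CH}_\pi([\xi|_{X_K}])=\tau$.

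To construct $\xi$, first apply a moving lemma on $X_0$ so that the curves $C_i$ in a representative of $\tau$ are smooth and are complete intersections of sections of $\O_{X_0}(m)$ for $m\gg0$. As $H^0(X,\O_X(m))\twoheadrightarrow H^0(X_0,\O_{X_0}(m))$ for $m\gg0$, these sections lift, and a general lift cuts out $\mathcal C_i\subset X$ smooth and flat over $\O_K$ with $(\mathcal C_i)_0=C_i$. Since $\O_{\mathcal C_i,\eta_{C_i}}$ is a discrete valuation ring with uniformiser $\pi$ and residue field $k(C_i)$, the function $f_i$ lifts to a rational function $F_i\in k(\mathcal C_i)^\times$ with $F_i|_{C_i}=f_i$, and for a sufficiently general such lift $F_i\colon\mathcal C_i\to\mathbb P^1$ is a morphism and $\xi^{(1)}:=\sum_i\overline{\Gamma_{F_i}}\in z^d(X,1)$ meets the faces properly. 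Each $\overline{\Gamma_{F_i}}\cong\mathcal C_i$ is flat over $\O_K$ (being integral and dominant over the discrete valuation ring $\O_K$), so $\xi^{(1)}$ is flat over $\O_K$, in general position, and $\xi^{(1)}\cdot X_0=\sum_i\overline{\Gamma_{f_i}}=\xi_0$. The only defect is that its boundary $\partial\xi^{(1)}=\sum_i\operatorname{div}_{\mathcal C_i}(F_i)=:Z$ need not vanish: $Z$ is a $1$-cycle on $X$ all of whose components are horizontal (finite flat over $\O_K$), with trivial special fibre $Z\cdot X_0=\sum_i\operatorname{div}_{C_i}(f_i)=0$.

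Removing this defect is the heart of the matter, and the point at which the henselian hypothesis is used. The required statement is that a horizontal $1$-cycle $Z$ on $X$ with $Z\cdot X_0=0$ can be written $Z=\sum_k\operatorname{div}_{\mathcal C'_k}(G'_k)$ with each $\mathcal C'_k\subset X$ flat over $\O_K$ and each $G'_k|_{(\mathcal C'_k)_0}$ \emph{constant}; equivalently, $Z$ is rationally trivial \emph{relative to} $X_0$. Granting this, put $\xi:=\xi^{(1)}-\sum_k\overline{\Gamma_{G'_k}}$: then $\partial\xi=0$, so $\xi$ defines a class in $CH^d(X,1)$; $\xi$ is still flat over $\O_K$ and in general position; and $\xi\cdot X_0=\xi_0-\sum_k\overline{\Gamma_{G'_k|_{(\mathcal C'_k)_0}}}=\xi_0$ in $z^d(X_0,1)$, because the graph of a constant function is degenerate. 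Hence $res^{CH}([\xi])=\tau$. To prove the claim one notes that $Z$ is the closure of a degree-zero zero-cycle $Z_K$ on $X_K$ whose specialisation is the zero cycle, and one must bound $Z_K$ by functions on curves whose closures specialise trivially onto $X_0$. I expect this to be done by a Bertini argument over $\O_K$: after reducing (by arranging the $C_i$, hence the support of $Z$, to lie on a smooth complete-intersection subvariety $T_0\subset X_0$ which lifts to a smooth $\mathcal T\subset X$ of smaller relative dimension, and using that $res^{CH}$ is compatible with the proper push-forward $\mathcal T\hookrightarrow X$) to auxiliary relative surfaces $\mathcal C'\subset X$ whose special fibres are curves, one trivialises the relevant divisors inside the $\mathcal C'$ by linear equivalence: a horizontal divisor on $\mathcal C'$ that is trivial on the curve $(\mathcal C')_0$ lies in the kernel of $\operatorname{Pic}(\mathcal C')\to\operatorname{Pic}((\mathcal C')_0)$, and, $\mathcal C'\to\operatorname{Spec}\O_K$ being a family of curves, this kernel and the surjectivity onto $\operatorname{Pic}((\mathcal C')_0)$ are controlled by the smoothness of the relative Picard scheme and by Hensel's lemma ($\O_K^\times\twoheadrightarrow k^\times$ being the simplest instance), which is what makes the equivalences realisable by functions congruent to constants on $(\mathcal C')_0$.

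Since $k$ is not assumed finite or algebraically closed, when $k$ is finite the moving and Bertini steps are run over the infinite base obtained by adjoining to $\O_K$ a compatible system of roots of unity and then descended by a norm argument; this is the step that forces $\Lambda=\Z/n\Z$-coefficients, and is the reason only surjectivity is claimed. Finally, the statement about Conjecture~\ref{conjkew} in bidegree $(2d-1,d)$ is immediate because, for $X/\O_K$ smooth, $res^{CH}$ agrees with $res$ in that bidegree under the identification of higher Chow groups with motivic cohomology.
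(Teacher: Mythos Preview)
Your architecture is the paper's: lift generators at the chain level (their Lemma~2.1), then correct the resulting boundary defect $Z$ by functions restricting trivially to $X_0$ (their Key Lemma~2.3), reducing the latter to relative surfaces via a Bertini argument and finishing with a relative Picard computation.

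The genuine gap is in that last step. You assert that on a smooth relative surface $\mathcal C'/\O_K$ a horizontal $1$-cycle $Z$ with $Z\cdot(\mathcal C')_0=0$ can be written as $\sum_k\operatorname{div}_{\mathcal C'}(G'_k)$ with each $G'_k|_{(\mathcal C')_0}$ constant, invoking smoothness of $\operatorname{Pic}_{\mathcal C'/\O_K}$ and Hensel. But Hensel only yields the \emph{surjectivity} of $\operatorname{Pic}(\mathcal C')\to\operatorname{Pic}((\mathcal C')_0)$; the kernel $\operatorname{Pic}(\mathcal C',(\mathcal C')_0)$ is $\varprojlim_m H^1\bigl((\mathcal C')_0,\,1+\pi\O_{(\mathcal C')_m}\bigr)$, whose graded pieces are the $k$-vector spaces $H^1\bigl((\mathcal C')_0,\O_{(\mathcal C')_0}\otimes(\pi)^j/(\pi)^{j+1}\bigr)$, generically nonzero (of dimension the genus of $(\mathcal C')_0$). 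So the integral statement you wrote is false. What the paper proves---and what suffices---is that $\operatorname{Pic}(\mathcal C',(\mathcal C')_0)$ is \emph{uniquely $n$-divisible} (because $1/n\in k$), so $Z\equiv\operatorname{div}(G')$ with $G'|_{(\mathcal C')_0}=1$ holds only modulo $n$. This, not the norm manoeuvre for finite $k$, is where $\Lambda$-coefficients are forced; the standard norm argument with two coprime-degree extensions would descend surjectivity integrally. Once you restate your ``required statement'' as a congruence mod $n$ and justify it via $n$-divisibility of the relative Picard group, your outline coincides with the paper's proof. (A minor point: moving the $C_i$ to smooth complete intersections is neither needed nor obviously available; the paper lifts an arbitrary curve $C_i\subset X_0$ directly to a relative surface regular along $C_i$ by a semi-local argument as in \cite[Lem.~7.2]{SS}.)
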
 
This implies the following corollary:
\begin{corollary}\label{cormt}
Let $X/\mathcal{O}_K$ be smooth. Then the specialisation map $$sp^{CH}_{\pi}:CH^d(X_K,1)_{\Lambda}\twoheadrightarrow CH^d(X_0,1)_{\Lambda}$$ is surjective. 
\end{corollary}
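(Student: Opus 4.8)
The plan is to deduce the corollary formally from Theorem \ref{mt}, using that $res^{CH}$ was defined as a composite whose last arrow is exactly $sp_\pi^{CH}$. Recall that for $X/\mathcal{O}_K$ smooth the map $res^{CH}$ is by definition the composition
$$CH^d(X,1)\xrightarrow{\ j^*\ }CH^d(X_K,1)\xrightarrow{\ sp_\pi^{CH}\ }CH^d(X_0,1),$$
where $j^*$ is flat pullback along the open immersion $j\colon X_K\hookrightarrow X$ of the generic fibre and $sp_\pi^{CH}=\partial\circ(\cdot(-\pi))$. Since $-\otimes\Lambda$ is functorial, applying it termwise yields a factorisation
$$res^{CH}\colon CH^d(X,1)_{\Lambda}\xrightarrow{\ j^*\ }CH^d(X_K,1)_{\Lambda}\xrightarrow{\ sp_\pi^{CH}\ }CH^d(X_0,1)_{\Lambda}$$
of precisely the homomorphism shown to be surjective in Theorem \ref{mt}.

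Now I would simply argue that a composite of two maps being surjective forces the second one to be surjective: given any class $\xi\in CH^d(X_0,1)_{\Lambda}$, by Theorem \ref{mt} there is $\alpha\in CH^d(X,1)_{\Lambda}$ with $res^{CH}(\alpha)=\xi$, and then $sp_\pi^{CH}(j^*\alpha)=res^{CH}(\alpha)=\xi$. Hence $sp_\pi^{CH}\colon CH^d(X_K,1)_{\Lambda}\to CH^d(X_0,1)_{\Lambda}$ is surjective, which is the assertion of the corollary.

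There is no genuine obstacle here; the only things to keep in mind are that $res^{CH}$ really is the displayed composite (this is its definition in the introduction, together with the compatibility of $\cdot(-\pi)$, $\partial$ and $j^*$ with reduction mod $n$), and that although $sp_\pi^{CH}$ depends on the choice of uniformiser $\pi$ — unlike $res^{CH}$ — the argument above applies verbatim for every choice of $\pi$, so surjectivity of $sp_\pi^{CH}$ holds for each local parameter.
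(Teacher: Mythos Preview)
Your proof is correct and is exactly the argument the paper has in mind: the corollary is stated as an immediate consequence of Theorem~\ref{mt} without further proof, precisely because $res^{CH}$ is by definition the composite $sp_\pi^{CH}\circ j^*$, so surjectivity of the composite forces surjectivity of $sp_\pi^{CH}$.
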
 
The restriction map in the degree of theorem \ref{mt} is of particular interest since it is related to a conjecture on the finiteness of $CH^d(X_K)[n]$ for $K$ a $p$-adic field. This is shown in section \ref{remarks} as well as the injectivity for $d=2$. Furthermore theorem \ref{mt} together with the main result of \cite{KEW} may be considered as a generalization to perfect residue fields of the vanishing of the Kato homology group $KH_3(X)$ defined in \cite{SS} where it was proven for $k$ finite or separably closed. \\

\paragraph{\textit{Acknowledgements.}} I would like to heartily thank my supervisor Moritz Kerz for his help and many ideas for this paper. I would also like to thank Johann Haas and Yigeng Zhao for helpful discussions and the department of mathematics of the university of Regensburg and the SFB 1085 "Higher Invariants" for the nice working environment.

\section{Main result}\label{mainresult}
Let $\mathcal{O}_K$ be an excellent henselian discrete valuation ring with quotient field $K$ and residue field $k=\mathcal{O}_K/\pi\mathcal{O}_K$ and always assume that $1/n\in k$.
From now on let $X$ be a smooth and projective scheme over Spec$\mathcal{O}_K$ of fibre dimension $d$ in which case we also say that $X$ is of relative dimension $d$ over $\O_K$. Let $X_K$ denote the generic fibre and $X_0$ the reduced special fibre. By $X_{(p)}$ we denote the set of points $x\in X$ such that $\text{dim}(\overline{\{x\}})=p$, where $\overline{\{x\}}$ denotes the closure of $x$ in $X$.

We are going to use the following notation for Rost's Chow groups with coefficients in Milnor K-theory (see \cite[Sec. 5]{Ro}):
$$C_p(X,m)=\bigoplus_{x\in X_{(p)}}(K_{m+p}^Mk(x))\otimes \mathbb{Z}/n\Z$$
$$Z_p(X,m)=\text{ker}[\partial:C_p(X,m)\rightarrow C_{p-1}(X,m)]$$
$$A_p(X,m)=H_p(C_*(X,m))$$
We write $Z_k(X)$ for the group of $k$-cycles on $X$, i.e. the free abelian group generated by $k$-dimensional closed subschemes of $X$.

Let $\pi$ be some fixed a local parameter of $\O_K$. We define the restriction map
$$res_\pi: C_p(X,m)\rightarrow C_{p-1}(X_0,m+1)$$
to be the composition
$$res_{\pi}: C_p(X,m)\rightarrow C_{p-1}(X_K,m+1)\xrightarrow{\cdot\{-\pi\}} C_{p-1}(X_K,m+2)\xrightarrow{\partial}C_{p-1}(X_0,m+1).$$
In the above composition the map $C_p(X,m)\rightarrow C_{p-1}(X_K,m)$ is defined to be the identity on all elements supported on $X_{(p)}\setminus {X_0}_{(p)}$ and zero on ${X_0}_{(p)}$. The map $\partial$ is defined to be the boundary map induced by the tame symbol on Milnor K-theory for discrete valuation rings.
More precisely, $\partial$ is defined as follows: Let $\overline{\{x\}}$ be the subscheme corresponding to $x\in X_{(p)}$. Let us assume for simplicity that $\overline{\{x\}}$ is normal. Otherwise we take the normalisation and use the norm map. Now if $y\in \overline{\{x\}}_{(p-1)}$, then $y$ defines a discrete valuation on $k(x)$. Let $\pi'$ be a local parameter of $k(x)$. Let $\partial^x_y:K^M_{n+1}k(x)\r K^M_{n}k(y)$ be the tame symbol defined by sending $\{\pi',u_1,...,u_n\}$ to $\{\bar{u_1},...,\bar{u_n}\},$ where the $u_i$ are units in the discrete valuation ring of $k(x)$ and the $\bar{u_i}$ their images in $k(y)$. $\partial$ is defined to be the sum of all $\partial^x_y$ taken over all $x\in X_{(p)}$ and all $y\in \overline{\{x\}}_{(p-1)}$. Note that the restriction map $res_\pi$ has to be distinguished from the specialisation map
$$sp^x_{y,\pi'}=\partial^x_y\circ \{-\pi'\}:K_{n}^Mk(x)\rightarrow K_{n}^Mk(y).$$
$sp^x_{y,\pi'}$ sends $\{\pi'^{i_1}u_1,...,\pi'^{i_n}u_n\}$ to $\{\bar{u_1},...,\bar{u_n}\},$ where again the $u_i$ are units in the discrete valuation ring of $k(x)$ and the $\bar{u_i}$ their images in $k(y)$.  
 
The map $res_\pi$ depends on the choice of $\pi$ but the induced map on homology
$$res: A_p(X,m)\rightarrow A_{p-1}(X_0,m+1)$$
is independent of the choice. This can be seen as follows: Let $u\in\O_K^\times$ and $\alpha\in C_p(X,m)$. Then $res_{u\pi}(\alpha)=\partial(\{-\pi u\}\cdot \alpha)=\partial(\{-\pi\}\cdot \alpha)+\partial(\{u\}\cdot \alpha)=res_{\pi}(\alpha)+\partial(\{u\}\cdot \alpha)$. Now if $\alpha\in A_p(X,m)$, then $\partial(\{u\}\cdot \alpha)=0$ and $res_{u\pi}(\alpha)=res_{\pi}(\alpha)$. In the following we will write $res$ for $res_\pi$, fixing a local parameter $\pi\in O_K$.

We now turn to our principle interest of study, the restriction map 
$$res:C_2(X,-1)\rightarrow C_1(X_0,0).$$

We start with the following lemma:
\begin{lemma}\label{Csurj} The map $res:C_2(X,-1)\rightarrow C_1(X_0,0)$, after having fixed $\pi$, is surjective. \end{lemma}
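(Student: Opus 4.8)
The plan is as follows. Since
$$C_1(X_0,0)=\bigoplus_{y\in (X_0)_{(1)}}k(y)^{\times}\otimes\Z/n$$
is generated by the classes supported at a single point, it suffices to show that for every $y\in (X_0)_{(1)}$ and every $u\in k(y)^{\times}$ the class $\{u\}_y$ lies in the image of $res$. Fix such $y$ and $u$ and let $Z=\overline{\{y\}}\subseteq X_0$, an integral curve with generic point $y$. Note that we do \emph{not} need $Z$ itself to lift to a relative curve: it is enough that $Z$ occur reducedly as one component of the special fibre of one.

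First I would construct an integral relative curve $\mathcal C\subseteq X$, flat and proper over $\O_K$ of relative dimension $1$, such that $\O_{\mathcal C,y}$ is a discrete valuation ring with uniformiser $\pi$ and residue field $k(y)$. Fix a projective embedding $X\hookrightarrow\mathbb P^N_{\O_K}$; for $m\gg 0$ the sheaf $\mathcal I_{Z/X_0}(m)$ is globally generated and $H^0(X,\O_X(m))\to H^0(X_0,\O_{X_0}(m))$ is surjective. Choose $d-1$ general global sections $\bar s_1,\dots,\bar s_{d-1}$ of $\mathcal I_{Z/X_0}(m)$; a Bertini argument shows that $Y_0:=V(\bar s_1,\dots,\bar s_{d-1})\cap X_0$ is then reduced of pure dimension $1$, contains $Z$ as a component, and that $\bar s_1,\dots,\bar s_{d-1}$ generate the maximal ideal of the regular local ring $\O_{X_0,y}$, so that $\O_{Y_0,y}=k(y)$. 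Lift the $\bar s_i$ arbitrarily to $s_i\in H^0(X,\O_X(m))$ and set $\mathcal C_{\mathrm{pre}}=V(s_1,\dots,s_{d-1})\subseteq X$. Its special fibre is $Y_0$, of dimension $1$, while by Krull's principal ideal theorem every component of $\mathcal C_{\mathrm{pre}}$ has dimension $\ge 2$; hence $\mathcal C_{\mathrm{pre}}$ is pure of dimension $2$ with every component dominating $\O_K$, and being a complete intersection of the expected codimension in the Cohen--Macaulay scheme $X$ it is Cohen--Macaulay, so it has no embedded points and is flat over $\O_K$. At $y$ the ring $\O_{\mathcal C_{\mathrm{pre}},y}$ is a one-dimensional Cohen--Macaulay local ring with maximal ideal $\sqrt{(\pi)}=(\pi)$, hence a discrete valuation ring; in particular a domain, so $y$ lies on a unique component $\mathcal C$ of $\mathcal C_{\mathrm{pre}}$ and $\O_{\mathcal C,y}=\O_{\mathcal C_{\mathrm{pre}},y}$. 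This $\mathcal C$ is the desired curve: it is integral, flat and proper over $\O_K$ of relative dimension $1$, and $\mathcal C_0$, being a closed subscheme of the reduced scheme $Y_0$, is reduced, so at each of its generic points $y=y_1,\dots,y_r$ the ring $\O_{\mathcal C,y_i}$ is a one-dimensional local domain with maximal ideal $(\pi)$, i.e.\ a discrete valuation ring with uniformiser $\pi$ and residue field $k(y_i)$; in particular $\mathcal C$ is normal at each $y_i$, so no normalisation or norm corrections intervene at these points.

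Next I would choose the function. Let $C=\mathcal C_K\subseteq X_K$, regarded as a point of $X_{(2)}$, so that $k(C)^{\times}\otimes\Z/n$ is a direct summand of $C_2(X,-1)$. Localising $\O_{\mathcal C}$ at the finite set of height-one primes containing $\pi$ gives a one-dimensional semilocal domain which by the Chinese remainder theorem surjects onto $\prod_{i=1}^{r}k(y_i)$; pick $f$ in it mapping to $(u,1,\dots,1)$, so that $f\in k(C)^{\times}$ is a unit at every $y_i$, with image $u$ at $y$ and image $1$ at $y_i$ for $i\ge 2$. Now unravelling the definition of $res$, the class $\{f\}\in k(C)^\times\otimes\Z/n\subseteq C_2(X,-1)$ satisfies $res(\{f\})=\partial(\{f,-\pi\}_C)=\sum_{i=1}^{r}\partial^{\mathcal C}_{y_i}(\{f,-\pi\})$, and since $\pi$ is a uniformiser at each $y_i$ and $v_{y_i}(f)=0$, each tame symbol $\partial^{\mathcal C}_{y_i}(\{f,-\pi\})$ equals the image of $f^{-1}$ in $k(y_i)^{\times}$ (using $\partial(\{f,-1\})=0$). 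Hence $res(\{f\})=\{u^{-1}\}_y$, and letting $u$ run over $k(y)^{\times}$ we obtain every generator of $C_1(X_0,0)$ in the image of $res$, which is therefore surjective.

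The genuinely delicate point is the construction of $\mathcal C$, i.e.\ the Bertini step: one must know that cutting $X_0$ by general high-degree hypersurfaces through $Z$ produces a reduced curve of the expected dimension that still contains $Z$ smoothly at its generic point, and one must be able to choose such hypersurfaces over a residue field $k$ that may be finite — for which I would invoke a Bertini theorem valid over arbitrary fields (e.g.\ Poonen's), or reduce the relevant dimension and reducedness statements to an extension of $k$. Once $\mathcal C$ has been produced, its flatness over $\O_K$ and the discrete-valuation-ring property at the codimension-one points of $\mathcal C_0$ are forced by the Cohen--Macaulayness of complete intersections, and everything else is the one-line tame symbol computation above.
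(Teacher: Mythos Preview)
Your argument is correct and follows the same strategy as the paper: construct a two-dimensional integral closed subscheme $\mathcal C\subset X$ (a ``relative surface'') whose special fibre contains the given curve $Z=\overline{\{y\}}$ with multiplicity one (equivalently, such that $\O_{\mathcal C,y}$ is a DVR with uniformiser $\pi$), and then use the Chinese remainder theorem in the semilocal ring $\O_{\mathcal C,\mathcal C_0}$ to produce $f\in k(\mathcal C)^\times$ specialising to $u$ at $y$ and to $1$ at the other generic points of $\mathcal C_0$, so that $res(\{f\})=\{u\}_y^{\pm 1}$. The paper does exactly this, except that it simply cites \cite[Lem.~7.2]{SS} for the existence of the relative surface rather than writing out a Bertini construction.

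Two minor remarks. First, you ask for more than is needed: the paper only requires regularity of $\mathcal C$ at the single point $y$ (multiplicity one of $Z$ in $\mathcal C_0$), not reducedness of all of $\mathcal C_0$; once $f\equiv 1$ at the other generic points $y_i$, the tame-symbol contribution there vanishes regardless of whether $\pi$ is a uniformiser at $y_i$ (the normalisation-plus-norm definition still yields $1$). This removes your dependence on a global Bertini smoothness statement and hence the worry about finite residue fields: the only ``genericity'' you actually need is that $\bar s_1,\dots,\bar s_{d-1}$ span $\mathfrak m_y/\mathfrak m_y^2$, and since that space is $(d-1)$-dimensional over the \emph{infinite} field $k(y)$ and the global sections surject onto it, such a choice exists over any $k$. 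Second, your tame-symbol bookkeeping is fine; whether one gets $u$ or $u^{-1}$ is a sign convention and irrelevant for surjectivity.
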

\begin{proof}
Let $\bar{u}\in K^M_1k(x)$ for some $x\in X_0^{(d-1)}$. As in the proof of \cite[Lem. 7.2]{SS} we can find a relative surface $Z\subset X$ containing $x$ and being regular at $x$ and such that $Z\cap X_0$ contains $\overline{\{x\}}$ with multiplicity $1$. Let $Z_0=\cup_{i\in I} Z_0^{(i)}\cup \overline{\{x\}}$ be the union of the pairwise different irreducible components of the special fiber of $Z$ with those irreducible components different from $\overline{\{x\}}$ indexed by $I$. 
Since all maximal ideals, $m_i$ corresponding to $Z_0^{(i)}$ and $m_x$ corresponding to $\overline{\{x\}}$, in the semi-local ring $\O_{Z,Z_0}$ are coprime, the map $\O_{Z,Z_0}\r \prod_{i\in I}\O_{Z,Z_0}/m_i\times \O_{Z,Z_0}/m_x$ is surjective. 
Therefore we can find a lift $u\in K^M_1k(z)$, $z$ being the generic point of $Z$, of $\bar{u}$ which specialises to $\bar{u}$ in $K(\overline{\{x\}})^{\times}$ and to $1$ in $K(Z_0^{(i)})^{\times}$ for all $i\in I$.
\end{proof}

The main result we are going to prove is the following:
\begin{proposition}\label{Asurj} The restriction map $res:A_2(X,-1)\rightarrow A_1(X_0,0)$ is surjective. \end{proposition}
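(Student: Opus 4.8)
The plan is to lift a given cycle class from $A_1(X_0,0)$ to $A_2(X,-1)$ by first lifting it as a chain via Lemma \ref{Csurj}, and then correcting the defect that prevents the lift from being a cycle. So let $\alpha\in Z_1(X_0,0)$ represent a class in $A_1(X_0,0)$; by Lemma \ref{Csurj} choose $\beta\in C_2(X,-1)$ with $res(\beta)=\alpha$. The obstruction to $\beta$ being a cycle is $\partial\beta\in C_1(X,-1)$, which is supported on codimension-$\geq 1$ points of $X$; the key point is that since $res(\beta)$ is already a cycle on $X_0$, the boundary $\partial\beta$ must be "purely horizontal" in a suitable sense, i.e. supported on the generic fibre away from $X_0$, or more precisely its image under the relevant composition vanishes. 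I would make this precise by a diagram chase in the double complex formed by the coniveau filtration on $X$ together with the splitting into generic-fibre and special-fibre parts, using compatibility of $res$ with the boundary maps $\partial$.

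The main mechanism I expect to use is a moving/surjectivity statement on the generic fibre: by Bloch's moving lemma (or the corresponding statement for Rost's complexes, e.g.\ \cite[Sec. 5]{Ro}) one can modify $\beta$ by a boundary $\partial\gamma$ with $\gamma\in C_3(X,-2)$ so as to make $\partial\beta$ lie entirely on the special fibre $X_0$, without changing $res(\beta)$ modulo boundaries (since $res$ of a boundary is a boundary). Concretely, I would argue that $\partial\beta - \iota_*(\text{something on }X_0)$ can be killed by a chain supported on $X_K$, using that the localization sequence $C_*(X_0) \to C_*(X) \to C_*(X_K)$ is exact and that the relevant homology of $C_*(X_K,-1)$ in the right degree — essentially $A_1(X_K,-1)$, a group of the form $\bigoplus_{x\in (X_K)_{(1)}} k(x)^\times/n$ modulo tame relations, i.e.\ related to $CH^d(X_K,1)_\Lambda$ — surjects appropriately. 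Once $\partial\beta$ is supported on $X_0$, its class in $A_1(X_0,0)$ (or rather in the appropriate group measuring the failure) can be shown to vanish because $res(\beta)=\alpha$ was a cycle, and a second application of surjectivity on $X_0$ lets us subtract a genuine cycle on $X$ restricting to $0$ in $A_1(X_0,0)$ — leaving a corrected $\tilde\beta\in Z_2(X,-1)$ with $res(\tilde\beta)$ homologous to $\alpha$.

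The hard part will be controlling the boundary term $\partial\beta$ precisely: a priori it lives in $C_1(X,-1) = \bigoplus_{x\in X_{(1)}} K_0^M k(x)\otimes\Lambda = \bigoplus_{x\in X_{(1)}}\Lambda$, and one must show that the "multiplicity" data it encodes along curves in $X$ is compatible with being killed after multiplying by $-\pi$ and applying $\partial$, i.e.\ that the specialisation-map bookkeeping closes up. I expect this to require a careful local analysis at each curve $\overline{\{y\}}\subset X$, distinguishing horizontal curves (flat over $\O_K$, meeting $X_0$ in finitely many points) from vertical curves (contained in $X_0$): for horizontal curves one uses that $res$ intertwines $\partial^X$ with $\partial^{X_0}$ up to the tame symbol in the $\pi$-direction, and for vertical curves one uses the explicit description of $res$ on $C_1(X_0,0)$. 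Assembling these local contributions into a global chain $\gamma$ with $\partial\gamma$ equal to the defect is where the projectivity of $X$ and a moving argument à la \cite[Lem. 7.2]{SS} enter; this is the technical core of the argument, and I would organize it as a separate key lemma before giving the short deduction above.
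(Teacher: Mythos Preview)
Your outer structure---lift $\alpha$ to $\beta$ via Lemma~\ref{Csurj} and then correct the defect $\partial\beta$---is exactly the paper's structure. The divergence, and the gap, is in the correction step.

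The central mechanism you propose does not work: modifying $\beta$ by a boundary $\partial\gamma$ (with $\gamma\in C_3(X,-1)$, not $C_3(X,-2)$ as you wrote) leaves $\partial\beta$ unchanged, since $\partial^2=0$. So no amount of ``moving $\beta$ within its chain class'' will push $\partial\beta$ onto the special fibre or kill its horizontal part. Likewise, the localization sequence for $X_0\subset X$ relates $A_*(X_0,m)$, $A_*(X,m)$, $A_*(X_K,m)$ with the \emph{same} second index $m$; it does not by itself produce the shift $A_2(X,-1)\to A_1(X_0,0)$ that $res$ realises, so appealing to $A_1(X_K,-1)$ does not close the diagram you have in mind.

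What the paper does instead is exactly the ``separate key lemma'' you gesture at in your last paragraph, but formulated sharply: given $\partial\beta\in\ker\bigl[Z_1(X)/n\xrightarrow{res}Z_0(X_0)/n\bigr]$, find $\xi'\in\ker\bigl[C_2(X,-1)\xrightarrow{res}C_1(X_0,0)\bigr]$ with $\partial\xi'=\partial\beta$. Then $\beta-\xi'$ is the desired cycle. The content is entirely in proving this lifting statement, and it is not a moving-lemma argument. For relative dimension~$1$ the paper identifies $\ker(res)$ on $1$-cycles with relative Cartier divisors $Div(X,X_0)$ and shows that $\mathrm{Pic}(X,X_0)/n=0$ (via the filtration $1+\pi^j\mathcal{O}_{X_m}$ and unique $n$-divisibility of coherent cohomology over $k$), so every such divisor is principal for a function specialising to $1$. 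For higher relative dimension, an Altman--Kleiman Bertini theorem is used to thread the support of $\partial\beta$ through a smooth relative surface $Z\subset X\times\mathbb{P}^N$ and a smooth relative $(d-1)$-fold $Z'$, reducing to the surface case plus induction. These geometric inputs---the vanishing of $\mathrm{Pic}(X,X_0)/n$ and the Bertini step---are the missing ideas in your proposal; the local analysis you sketch along horizontal and vertical curves does not by itself produce a global $\xi'$.
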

It will be implied by the following key lemma:
\begin{keylemma}\label{keylemma1} Let $\xi\in \text{ker}[Z_1(X)/n\overset{res}{\rightarrow} Z_0(X_0)/n]$, then there is a $\xi'\in \text{ker}[C_2(X,-1)\overset{res}{\rightarrow} C_1(X_0,0)]$ with $\partial(\xi')=\xi$.
\end{keylemma}
\begin{proof}(Proposition \ref{Asurj}) Let $\xi_0\in \text{ker}[C_1(X_0,0)\overset{\partial}{\rightarrow}C_0(X_0,0)]$. By lemma \ref{Csurj} there is a $\xi\in C_2(X,-1)$ with $res(\xi)=\xi_0$. As $res(\partial (\xi))=\partial(res(\xi))=0$, key lemma \ref{keylemma1} tells us that there is a $\xi'\in \text{ker}(C_2(X,-1)\rightarrow C_1(X_0,0))$ with $\partial\xi'=\partial\xi$. As $res$ is a homomorphism, it follows that $\xi_0= res(\xi-\xi')$ and $\partial(\xi-\xi')=0$. Hence $res:Z_2(X,-1)\rightarrow Z_1(X_0,0)$ is surjective and the commutativity of $\partial$ and $res$ implies that $res:A_2(X,-1)\rightarrow A_1(X_0,0)$ is surjective. 
\end{proof}

\begin{proof} (Key lemma \ref{keylemma1}) We start with the case of relative dimension $d=1$, i.e. $X$ is a smooth fibered surface over $\mathcal{O}_K$, and consider the following diagram:
$$\begin{xy} 
  \xymatrix{
     C_2(X,-1)=K(X)^*\otimes \mathbb{Z}/n\Z \ar[r]^{res} \ar[d]_{\partial} &  C_1(X_0,0)=K(X_0)^*\otimes \mathbb{Z}/n\Z  \ar[d]^{\partial}  \\
      Z_1(X)/n \ar[r]^{res}     & Z_0(X_0)/n  
  }
\end{xy} $$
where we write $Z_i(X)/n$ for $C_{i}(X,-i)$ which are just the cycles of dimension $i$ modulo $n$. The restriction map in the lowest degree $res:Z_1(X)/n \r Z_0(X_0)/n$ agrees with the specialisation map on cycles defined by Fulton in \cite[Rem. 2.3]{Fu} since $X_0$ is a principle Cartier divisor and $\partial^x_y(\{-\pi\})=\text{ord}_{\O_{\overline{\{x\}},y}}(\pi)$. Modifying $\xi\in \text{ker}[Z_1(X)/n\overset{res}{\rightarrow} Z_0(X_0)/n]$ by elements equivalent to zero in $Z_1(X)/n$, we may represent it by an element $x\in\text{ker}[Z_1(X)\rightarrow Z_0(X_0)]$. 

We consider the following short exact sequence of sheaves:
\begin{equation}\label{eq1}
0\rightarrow \mathcal{O}^*_{X;X_0}\rightarrow \mathcal{M}^*_{X;X_0}\rightarrow Div(X,X_0)\rightarrow 0,
\end{equation}
where $\mathcal{M}^*_{X;X_0} (\text{resp. }\mathcal{O}^*_{X;X_0})$ denotes the sheaf of invertible meromorphic functions (resp. invertible regular functions) relative to $\text{Spec}\mathcal{O}_K$ and congruent to $1$ in the generic point of $X_0$, i.e. in $\mathcal{O}_{X,{\mu}}$, where $\mu$ is the generic point of $X_0$, and $Div(X,X_0)$ is the sheaf assoiciated to $\mathcal{M}^*_{X;X_0}/\mathcal{O}^*_{X;X_0}$. In other words, $Div(X,X_0)(U)$ is the set of relative Cartier divisors on $U\subset X$ which specialise to zero in $X_0$. For the concept of relative meromorphic funtions and divisors see \cite[Sec. 20, 21.15]{EGA4}.

We want to show that $(Div(X,X_0)(X)/\mathcal{M}^*_{X;X_0}(X))/n=0$.
\begin{claim} $\text{Pic}(X,X_0)\cong Div(X,X_0)(X)/\mathcal{M}^*_{X;X_0}(X)$.
\end{claim}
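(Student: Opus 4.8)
I read $\mathrm{Pic}(X,X_0)$ as $H^1(X,\mathcal{O}^*_{X;X_0})$, the relative Picard group classifying line bundles on $X$ equipped with a trivialisation at the generic point $\mu$ of $X_0$, and I would deduce the claim from the long exact cohomology sequence attached to the short exact sequence \eqref{eq1}. This sequence contains the segment
$$\mathcal{M}^*_{X;X_0}(X)\longrightarrow Div(X,X_0)(X)\xrightarrow{\ \delta\ } H^1(X,\mathcal{O}^*_{X;X_0})\longrightarrow H^1(X,\mathcal{M}^*_{X;X_0}),$$
so that $Div(X,X_0)(X)/\mathcal{M}^*_{X;X_0}(X)\cong \mathrm{image}(\delta)$, and the claim amounts to showing that $\delta$ is surjective, i.e. that the connecting map $H^1(X,\mathcal{O}^*_{X;X_0})\to H^1(X,\mathcal{M}^*_{X;X_0})$ vanishes.

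To see this, observe that a class in $H^1(X,\mathcal{O}^*_{X;X_0})$ is represented by a line bundle $L$ on $X$ together with a generator of $L_\mu$, and it dies in $H^1(X,\mathcal{M}^*_{X;X_0})$ precisely when $L$ carries a relative invertible meromorphic section inducing that generator at $\mu$. Since $X$ is integral, $L$ admits a nonzero rational section; after scaling it by a suitable element of $k(X)^*$ we may assume it restricts to the prescribed generator at $\mu$. As $X$ is regular, the associated Weil divisor of this section is Cartier; and because the section is a unit at $\mu$, this divisor contains no component through $\mu$, hence no vertical component of the special fibre, so it is a relative Cartier divisor congruent to $0$ at $\mu$. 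Thus the rescaled section is a global section of the $\mathcal{M}^*_{X;X_0}$-torsor associated to $L$, which shows that $[L]$ maps to $0$; this finishes the proof.

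The delicate point is the last step, namely controlling the divisor of the rational section along the special fibre. It uses regularity of $X$ in an essential way, to pass from Weil to Cartier divisors relative to $\mathcal{O}_K$, and the hypothesis that $\mu$ is the generic point of $X_0$, so that being a unit at $\mu$ already suppresses all vertical components of the divisor; if $X_0$ is allowed to be reducible one must instead impose the unit condition at the finitely many generic points of $X_0$ simultaneously, which can be arranged by a semilocal approximation exactly as in the proof of Lemma \ref{Csurj}. Everything else is the formal exactness argument for \eqref{eq1}.
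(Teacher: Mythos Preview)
Your argument is correct and follows essentially the same route as the paper: both use the long exact cohomology sequence attached to \eqref{eq1} and reduce the claim to the surjectivity of $Div(X,X_0)(X)\to \mathrm{Pic}(X,X_0)$, which is established by producing, for a given line bundle with trivialisation at $\mu$, a meromorphic section inducing that trivialisation. The paper phrases the construction as ``lift $\psi_\mu^{-1}(1)$ to $s\in\mathcal{L}_\mu$ and take $\mathrm{div}(s)$'', while you phrase it as ``take any rational section and rescale''; these are the same manoeuvre, and your additional remarks on why the resulting divisor has no vertical component (using regularity of $X$ and the unit condition at $\mu$) in fact make explicit a step the paper leaves implicit.
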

Short exact sequence (\ref{eq1}) induces the following exact sequence:
$$\mathcal{O}^*_{X;X_0}(X)\rightarrow\mathcal{M}^*_{X;X_0}(X)\rightarrow Div(X,X_0)(X)\rightarrow \text{Pic}(X,X_0)\rightarrow H^1(X,\mathcal{M}^*_{X;X_0})$$
Now $\text{Pic}(X,X_0)=H^1(X,\mathcal{O}^*_{X;X_0})$ can also be described as the group of isomorphism classes of pairs $(\mathcal{L},\psi)$ of an invertible sheaf $\mathcal{L}$ with a trivialisation $\psi:\mathcal{L}|_{X_0}\cong \mathcal{O}_{X_0}$ (see f.e. \cite[Lem. 2.1]{SV}). 

The following argument shows that the map $Div(X,X_0)(X)\rightarrow \text{Pic}(X,X_0)$ is surjective: Let $(\mathcal{L},\psi)\in \text{Pic}(X,X_0)$. The trivialisation $\psi$ gives an isomorphism $\psi:\mathcal{L}\otimes_{\mathcal{O}_X}\mathcal{O}_{X_0}\xrightarrow{\cong}\mathcal{O}_{X_0}$ and by localising an isomorphism $\psi_\mu:\mathcal{L}_{\mu}\otimes_{\mathcal{O}_{X,{\mu}}}\mathcal{O}_{X_0,{\mu}}\xrightarrow{\cong}\mathcal{O}_{X_0,{\mu}}$, where $\mu$ again denotes the generic point of $X_0$. Let $s$ denote a lift of $\psi_{\mu}^{-1}(1)$ under the surjective map $\mathcal{L}_{\mu}\twoheadrightarrow \mathcal{L}_{\mu}\otimes_{\mathcal{O}_{X,{\mu}}}\mathcal{O}_{X_0,{\mu}}$. Then $s$ is a meromorphic section of $\mathcal{L}$ and the divisor div$(s)\in Div(X,X_0)(X)$ maps to $(\mathcal{L},\psi)$.  

It follows that $\text{Pic}(X,X_0)\cong Div(X,X_0)(X)/\mathcal{M}^*_{X;X_0}(X)$. \qed 
\begin{claim} $\text{Pic}(X,X_0)$ is uniquely $n$-divisible. \end{claim}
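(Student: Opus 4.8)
The plan is to show that multiplication by $n$ is an isomorphism on $\text{Pic}(X,X_0)=H^1_{et}(X,\mathcal{O}^*_{X;X_0})$, i.e.\ that it is uniquely $n$-divisible; by the previous claim this in particular yields $(Div(X,X_0)(X)/\mathcal{M}^*_{X;X_0}(X))/n=0$. We compute $\text{Pic}(X,X_0)$ in the \'etale topology, which is harmless by Hilbert 90. Write $j:X_K\hookrightarrow X$ and $i:X_0\hookrightarrow X$ for the open and closed immersions, and recall that $n\in\O_K^\times$ (since $1/n\in k$ and $\O_K$ is local), so that $\mu_n$ is a finite locally constant \'etale sheaf on $X$ and on $X_0$. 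First I would record the short exact sequence of \'etale sheaves on $X$
$$0\r \mathcal{O}^*_{X;X_0}\r \mathbb{G}_{m,X}\r i_*\mathbb{G}_{m,X_0}\r 0$$
— surjective on the right because \'etale-locally a unit on $X_0$ lifts to a unit on $X$ — as well as the localization sequence $0\r j_!\mu_n\r \mu_{n,X}\r i_*\mu_{n,X_0}\r 0$ (the standard $j_!j^\ast$–$i_\ast i^\ast$ triangle for the sheaf $\mu_n$ on $X$).

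Next I would produce a relative Kummer sequence by applying the snake lemma to multiplication by $n$ on the first of these sequences. Since $\mathbb{G}_m\xrightarrow{\cdot n}\mathbb{G}_m$ is surjective on $X_{et}$ (ordinary Kummer theory) and, by the localization sequence just recalled, $\mu_{n,X}\r i_*\mu_{n,X_0}$ is surjective with kernel $j_!\mu_n$, the snake lemma gives the exact sequence of \'etale sheaves on $X$
$$0\r j_!\mu_n\r \mathcal{O}^*_{X;X_0}\xrightarrow{\cdot n} \mathcal{O}^*_{X;X_0}\r 0.$$
I would then show $H^i_{et}(X,j_!\mu_n)=0$ for all $i\geq 0$: the long exact cohomology sequence of $0\r j_!\mu_n\r \mu_{n,X}\r i_*\mu_{n,X_0}\r 0$, together with $H^i_{et}(X,i_*\mu_{n,X_0})=H^i_{et}(X_0,\mu_n)$, reduces this to the assertion that the restriction map $H^i_{et}(X,\mu_n)\r H^i_{et}(X_0,\mu_n)$ is an isomorphism for every $i$. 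This is precisely the proper base change theorem applied to the proper morphism $X\r\text{Spec}\,\O_K$ over the henselian local ring $\O_K$ (with $\mu_n$ torsion). This is the one step where properness of $X/\O_K$ and henselianness of $\O_K$ are genuinely used.

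Finally, feeding the vanishing of $H^1_{et}(X,j_!\mu_n)$ and $H^2_{et}(X,j_!\mu_n)$ into the long exact cohomology sequence of the relative Kummer sequence makes the connecting maps $H^1_{et}(X,j_!\mu_n)\r\text{Pic}(X,X_0)$ and $\text{Pic}(X,X_0)\r H^2_{et}(X,j_!\mu_n)$ zero, so that $\cdot n:\text{Pic}(X,X_0)\r\text{Pic}(X,X_0)$ is both injective and surjective; hence $\text{Pic}(X,X_0)$ is uniquely $n$-divisible. The only non-formal ingredient is the proper base change isomorphism $H^*_{et}(X,\mu_n)\simeq H^*_{et}(X_0,\mu_n)$; everything else is the snake lemma and long exact sequences, so I expect the point to be careful about is setting up the relative Kummer sequence correctly — in particular, identifying the kernel of $\mu_{n,X}\r i_*\mu_{n,X_0}$ with $j_!\mu_n$ and checking exactness on the right of the relative units sequence.
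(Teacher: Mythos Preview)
Your argument is correct, but it follows a genuinely different route from the paper's. The paper argues infinitesimally: it invokes formal GAGA (\cite[Thm.~5.1.4]{EGA3}) to identify $\text{Pic}(X,X_0)$ with $\varprojlim_m H^1(X_0,1+\pi\mathcal{O}_{X_m})$, filters each sheaf $1+\pi\mathcal{O}_{X_m}$ by powers of $\pi$ so that the graded pieces are line bundles on $X_0$, and then uses that the coherent cohomology groups $H^i(X_0,\mathcal{O}_{X_0}\otimes\pi^n)$ are $k$-vector spaces, hence uniquely $n$-divisible since $1/n\in k$. Your approach instead stays on $X$ and works \'etale-topologically: a relative Kummer sequence $0\to j_!\mu_n\to\mathcal{O}^*_{X;X_0}\xrightarrow{\cdot n}\mathcal{O}^*_{X;X_0}\to 0$ obtained via the snake lemma, combined with the vanishing of $H^*_{\text{\'et}}(X,j_!\mu_n)$ coming from proper base change over the henselian base. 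Both are clean; the paper's method is more elementary in that it uses only coherent cohomology and Grothendieck's existence theorem, whereas yours trades those for \'etale proper base change --- which is in any case invoked later in Section~\ref{remarks}, so your argument fits naturally with the rest of the paper. Your remark that the point needing care is the identification $\ker(\mu_{n,X}\to i_*\mu_{n,X_0})\cong j_!\mu_n$ is apt, and your justification via the $j_!j^*$--$i_*i^*$ triangle is fine; the only other thing worth making explicit is that ``congruent to $1$ at the generic point of $X_0$'' really does mean ``restricts to $1$ on $X_0$'' here, since $X_0$ is integral in the smooth relative-dimension-one case, so that your sequence $0\to\mathcal{O}^*_{X;X_0}\to\mathbb{G}_{m,X}\to i_*\mathbb{G}_{m,X_0}\to 0$ matches the paper's sheaf.
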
 
Since $$\text{Pic}(X,X_0)\cong \varprojlim_m\text{Pic}(X_m,X_0)\cong\varprojlim_m H^1(X_0,1+\pi\mathcal{O}_{X_m}),$$ where the first isomorphism follows from \cite[Thm. 5.1.4]{EGA3}, it suffices to show that $H^1(X_0,1+\pi\mathcal{O}_{X_m})$ is uniquely $n$-divisible. This can be seen as follows: 
$$1+\pi\mathcal{O}_{X_m}\supset 1+\pi^{2}\mathcal{O}_{X_m}\supset ...\supset 1$$
defines a finite filtration on the sheaf $1+\pi\mathcal{O}_{X_m}$ with graded pieces $gr^n=(\pi)^n/(\pi)^{n+1}\cong \mathcal{O}_{X_0}\otimes (\pi)^n$. We use this filtration to define a filtration on $H^1(X_0,1+\pi\mathcal{O}_{X_m})$ by 
$$F^n:=\text{Im}(H^1(X_0,1+\pi^n\mathcal{O}_{X_m})\rightarrow H^1(X_0,1+\pi\mathcal{O}_{X_m})).$$
The unique divisibility of $H^1(X_0,1+\pi\mathcal{O}_{X_m})$ follows now by descending induction from the exact sequence 
$$0\rightarrow 1+\pi^{n+1}\mathcal{O}_{X_m}\rightarrow 1+\pi^{n}\mathcal{O}_{X_m}\rightarrow gr^n\rightarrow 0,$$
the unique divisibility of $H^i(X_0,\mathcal{O}_{X_0}\otimes \pi^n)$ as a finitely generated $k$-module and the five-lemma.  \end{proof}

It follows that $\text{Pic}(X,X_0)/n\cong (Div(X,X_0)(X)/\mathcal{M}^*_{X;X_0}(X))/n=0$ and therefore that the class of $x$ in $Z_1(X)/n$, i.e. $\xi$, is in the image of $\text{ker}[C_2(X,-1)\overset{res}{\rightarrow} C_1(X_0,0)]$ under $\partial$.  

We now do the induction step for $X$ of arbitraty relative dimension $d>1$ over Spec$\mathcal{O}_K$, assuming that the key lemma holds for relative dimension $d-1$, using an idea of Bloch put forward in \cite[App.]{EWB}. By a standard norm argument we may from now on assume that $k$ is infinite. 

As above we may represent $\xi$ by an element of $\text{ker}[Z_1(X)\rightarrow Z_0(X_0)]$ and as in the proof of \cite[Prop. 4.1]{KEW} we may assume that $\xi$ is represented by a cycle of the form $[x]-r[y]\in \text{ker}[Z_1(X)\rightarrow Z_0(X_0)]$ with $x$ and $y$ integral and such that $y$ is regular and has intersection number $1$ with $X_0$. Let us recall the argument: First note that one can lift a reduced closed point of $X_0$ to an integral horizontal one-cycle having intersection number $1$ with $X_0$. Now if $\xi=\sum_{i=1}^s n_i[x_i]\in \text{ker}[Z_1(X)\rightarrow Z_0(X_0)]$, then we lift $(x_i\cap X_0)_{\text{red}}$ to a one-cycle $y_i$ of the aforementioned type. Furthermore, we choose the same $y_i$ for all the $x_i$ intersecting $X_0$ in the same closed point. Let $r_i$ be the intersection multiplicity of $x_i$ with $X_0$. Then also $\sum_{i=1}^s n_ir_i[y_i]\in \text{ker}[Z_1(X)\rightarrow Z_0(X_0)]$ and it suffices to show the statement for each $x_i-r_iy_i$ separately, i.e. the claim follows. 

Let $\tilde x$ be the normalisation of $x$. Since $\O_K$ is excellent, $\tilde x$ is finite over $x$. This implies that there is an imbedding $\tilde x\hookrightarrow X':=X\times_{\text{Spec}\mathcal{O}_K}\mathbb{P}^N$ such that the following diagram commutes:
$$\begin{xy} 
  \xymatrix{
  \tilde x \ar[r]^{} \ar[d]_{} & X'=X\times_{\text{Spec}\mathcal{O}_K}\mathbb{P}^N  \ar[d]^{pr_X}  \\
     x \ar[r]^{} \ar[d]_{} & X \ar[d]^{}  \\
     \text{Spec}\mathcal{O}_K \ar[r]^{=}     &  \text{Spec}\mathcal{O}_K
  }
\end{xy} $$
Let $[\tilde x\cap X_0']= r'[\bar z]$ for $\bar z$ an integral zero-dimensional subscheme of $X'_0$. We take a regular lift $z$ of $\bar z$ in $y\times \mathbb{P}^N\subset X'$ which has intersection number $1$ with $X_0'$ and get that $[\tilde x]-r'[z]\in ker[Z_1(X')\rightarrow Z_0(X_0')]$ and $pr_{X*}([\tilde x]-r'[z])=[x]-r[y]=\xi$. 

We now use a Bertini theorem by Altman and Kleiman to prove key lemma \ref{keylemma1} by an induction on the relative dimension of $X$ over $\O_K$.
\begin{lemma}\label{subschemes} There exist smooth closed subschemes $Z,Z'\subset X'$ with the following properties:
\begin{enumerate}
         \item $Z$ has fiber dimension one, $Z'$ has fiber dimension $d-1$.
         \item $Z$ contains $\tilde x$, $Z'$ contains $z$.
         \item The intersection $Z\cap Z'\cap X'_0$ consist of reduced points.
      \end{enumerate}  
\end{lemma}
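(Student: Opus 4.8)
The plan is to construct $Z$ and $Z'$ as intersections of $X'$ with suitable generic linear subspaces of the ambient projective space, using a Bertini-type theorem of Altman--Kleiman for smoothness away from a prescribed closed subscheme. Recall that $X' = X \times_{\text{Spec}\,\mathcal{O}_K} \mathbb{P}^N$ comes with a projective embedding over $\mathcal{O}_K$ into some $\mathbb{P}^M_{\mathcal{O}_K}$ (via the given embedding of $X$ composed with a Segre embedding), so that $\tilde x$ and $z$ are closed subschemes of $\mathbb{P}^M_{\mathcal{O}_K}$ contained in $X'$. First I would produce $Z$: since $\tilde x$ is smooth over $\mathcal{O}_K$ of relative dimension $1$ (it is the normalisation of an integral one-cycle, hence regular, and after possibly shrinking we may assume it is smooth over the base) and has codimension $d$ in $X'$ along each fibre, I would cut $X'$ by $d-1$ hyperplane sections chosen generically among those containing $\tilde x$. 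The Altman--Kleiman Bertini theorem (in the form valid for a base scheme, e.g. applied fibrewise together with openness of the smooth locus, or directly in the relative version) guarantees that for a general such choice the resulting $Z$ is smooth over $\mathcal{O}_K$ of relative dimension one away from any bad locus we wish to avoid; since $\tilde x$ is already smooth over $\mathcal{O}_K$ and of the correct dimension, a general member is smooth everywhere and contains $\tilde x$. The same argument applied to $z$ — which is regular with intersection number one with $X'_0$, hence smooth over $\mathcal{O}_K$ of relative dimension zero, sitting in codimension $d$ — yields $Z'$ of relative dimension $d-1$ by cutting with one generic hyperplane through $z$, smooth over $\mathcal{O}_K$ and containing $z$.

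The essential point, and the step I expect to require the most care, is arranging property (3): that $Z \cap Z' \cap X'_0$ consists of reduced points. Here I would apply the Bertini theorem of Altman--Kleiman once more, this time on the special fibre. Over the residue field $k$ (which we have reduced to being infinite), $Z_0 := Z \cap X'_0$ and $Z'_0 := Z' \cap X'_0$ are quasi-projective $k$-schemes; the hyperplanes cutting out $Z$ and $Z'$ were chosen generically, and I would invoke the general-position statement that for general such linear sections the scheme-theoretic intersection $Z_0 \cap Z'_0$ is smooth of the expected dimension away from the base loci $\tilde x_0$ and $z_0$. Since $\dim Z_0 = 1$ and $\dim Z'_0 = d-1$ inside the $d$-dimensional $X'_0$, the expected dimension of $Z_0 \cap Z'_0$ is $0$, so general position forces it to be a finite set of reduced points, except possibly at the prescribed subschemes $\tilde x \cap z$. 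But by construction $z$ was chosen to lie on $y \times \mathbb{P}^N$ while $\tilde x$ maps to $x$, and $[x] - r[y] \in \ker[Z_1(X) \to Z_0(X_0)]$ was arranged so that $x$ and $y$ meet $X_0$ in the same closed point with matching multiplicities; one checks that one may take $\tilde x$ and $z$ disjoint, or meeting only in the controlled reduced points, so that $\tilde x \cap z$ contributes no non-reduced points to $Z_0 \cap Z'_0$.

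Finally I would assemble these choices: because each of the finitely many genericity conditions (smoothness of $Z$, smoothness of $Z'$, reducedness of $Z_0 \cap Z'_0$ away from the prescribed loci, and the compatibility at those loci) is open and dense in the relevant space of linear sections over the infinite field $k$ — and, for the conditions over $\mathcal{O}_K$, holds after spreading out from the generic and special fibres using properness of $X'$ over $\mathcal{O}_K$ — a common choice satisfying all of them exists. This produces $Z$ and $Z'$ with properties (1), (2), (3), completing the proof of the lemma. The main obstacle is purely the transversality on the special fibre at the prescribed subschemes $\tilde x$ and $z$, where the Altman--Kleiman theorem only gives smoothness of the general section \emph{outside} a fixed closed set, so one must separately verify that no non-reduced points are forced there; this is where the careful choice of $z$ on $y \times \mathbb{P}^N$ with intersection number one is used.
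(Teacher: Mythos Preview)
Your approach has a genuine gap at its starting point: the curve $\tilde x$ is \emph{not} smooth over $\mathcal{O}_K$ in general. It is the normalisation of an integral one-cycle, hence a regular one-dimensional scheme, but the intersection $[\tilde x\cap X'_0]=r'[\bar z]$ may well have $r'>1$, i.e.\ $\tilde x$ can be ramified over $\mathcal{O}_K$. (There is also a confusion of dimensions: $\tilde x$ is finite over $\mathcal{O}_K$, so has relative dimension~$0$, not~$1$, and its fibrewise codimension in $X'=X\times\mathbb{P}^N$ is $d+N$, not $d$; one needs $d+N-1$ sections to cut out $Z$.) Once $\tilde x$ fails to be smooth over the base, there is no relative Bertini statement that will hand you a smooth $Z\supset\tilde x$ over $\mathcal{O}_K$ by cutting with generic hyperplanes through $\tilde x$; the Altman--Kleiman theorem only controls smoothness \emph{away} from the prescribed subscheme, and the ``shrinking'' you invoke is not available for a closed subscheme.

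The paper's proof avoids this entirely by reversing the order of operations. It works \emph{only} on the special fibre $X'_0$, a smooth projective variety over the (infinite) field $k$. There the relevant hypothesis of Altman--Kleiman is that the embedding dimension of $\tilde x\cap X'_0$ at each of its points be strictly less than $\dim X'_0$, and this holds because $\tilde x$ is a regular curve (so $e_{\tilde x\cap X'_0}(p)\le e_{\tilde x}(p)=1<d\le d+N$), regardless of ramification. Altman--Kleiman then produces, for $M\gg 0$, sections $\bar\sigma_1,\dots,\bar\sigma_{d+N-1}\in\Gamma(\mathcal{J}_{\tilde x}|_{X'_0}(M))$ and $\bar\sigma'\in\Gamma(\mathcal{J}_z|_{X'_0}(M))$ cutting out smooth subschemes of $X'_0$ of the correct dimensions and meeting transversally --- this simultaneously gives property~(3). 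These sections are then \emph{lifted} to $X'$ using the surjection $\Gamma(\mathcal{J}\otimes\mathcal{O}_{X'}(M))\twoheadrightarrow\Gamma(\mathcal{J}\otimes\mathcal{O}_{X'_0}(M))$ furnished by Serre vanishing (this is why high degree $M$, not hyperplanes, is essential). The resulting complete intersections $Z,Z'\subset X'$ have smooth special fibres of the right dimension, hence are smooth over the henselian base $\mathcal{O}_K$ by the fibrewise criterion. So the missing idea is: apply Bertini over $k$ and lift, rather than attempt a relative Bertini over $\mathcal{O}_K$.
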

\begin{proof}
First note that for a sheaf of ideals $\mathcal{J}\subset \mathcal{O}_{X'}$  we have the following short exact sequence:
$$0\rightarrow \mathcal{J}\otimes_{\mathcal{O}_{X'}}\mathcal{O}_{X'}(-[X_0'])(M)\rightarrow \mathcal{J}\otimes_{\mathcal{O}_{X'}}\mathcal{O}_{X'}(M)\rightarrow \mathcal{J}\otimes_{\mathcal{O}_{X'}}i_*\mathcal{O}_{X_0'}(M)\rightarrow 0$$
for $i:X'_0\hookrightarrow X'$ and $M\in\mathbb{Z}$. For $M\gg 0$ Serre vanishing implies that $H^1(X', \mathcal{F}(M))=0$ for $\mathcal{F}$ coherent and therefore that the map
$$\Gamma(\mathcal{J}\otimes_{\mathcal{O}_{X'}}\mathcal{O}_{X'}(M))\twoheadrightarrow \Gamma(\mathcal{J}\otimes_{\mathcal{O}_{X'}}\mathcal{O}_{X'_0}(M))$$ 
is surjective. This allows us to lift the sections on the right defining subvarieties of $X_0'$ to sections of a twisted sheaf of ideals on $X'$. 

Let $\mathcal{J}_{\tilde x}$ be the sheaf of ideals defining $\tilde x$ and $\mathcal{J}_z$ be the sheaf of ideals defining $z$. Let $p\in\tilde x\cap X'_0$ $(q\in z\cap X'_0)$. Then $\text{dim}_{X_0}(p)=d\geq 2$ and since $\tilde x$ (resp. $z$) is regular, we have that $e_{\tilde x\cap X_0'}(p)\leq e_{\tilde x}(p)=\text{dim}_{k(p)}(\Omega^1_{\tilde x}(p))=1<2$, where $e_{\tilde x}(p)$ is the embedding dimension of $\tilde x$ at $p$ and analogously for $q$. Therefore by \cite[Thm. 7]{AK} we can find sections in $\bar\sigma_1,...,\bar\sigma_{d+N-1}\in\mathcal{J}_{\tilde x}|_{X_0'}(M)$ (resp. $\bar\sigma'\in\mathcal{J}_{\tilde x}|_{X_0'}(M)$) defining smooth subschemes containing $p$ (resp. $q$) that intersect transversally. Let $\sigma_1,...,\sigma_{d+N-1}$ (resp. $\sigma'$) be liftings under the surjections $\Gamma(\mathcal{J}_{\tilde{x}}\otimes_{\mathcal{O}_{X'}}\mathcal{O}_{X'}(M))\twoheadrightarrow \Gamma(\mathcal{J}_{\tilde{x}}\otimes_{\mathcal{O}_{X'}}\mathcal{O}_{X'_0}(M))$ and $\Gamma(\mathcal{J}_z\otimes_{\mathcal{O}_{X'}}\mathcal{O}_{X'}(M))\twoheadrightarrow \Gamma(\mathcal{J}_z\otimes_{\mathcal{O}_{X'}}\mathcal{O}_{X'_0}(M))$. Then the complete intersections $Z:=V(\sigma_1,...,\sigma_{d+N-1})$ and $Z':=V(\sigma')$ have the desired properties. 
\end{proof}

Using these subschemes, we can now do the induction step and finish the proof of the key lemma. Since $Z\cap Z'\cap X_0'$ consists of reduced points, the component $z'$ of $Z\cap Z'$ that contains $z\cap X_0'$ has intersection number $1$ with $X_0'$ and is a regular curve as it is regular over the closed point of Spec$\mathcal{O}_K$. Now since $Z'$ is of relative dimension $d-1$ and $z$ and $z'$ both lie in $Z'$ and satisfy $res([z']-[z])=0$, we get by the induction assumption that there is a $\xi$ with support on $Z'$ restricting to $1$ and with $\partial(\xi)=[z']-[z]$. 

By the relativ dimension one case proved in the beginning we get that for $\tilde x,z'\subset Z$ and $[\tilde x]-r'[z']$, which also restricts to $0$, there is a $\xi'$ with support on $Z$ such that $res(\xi')=0$ and $\partial(\xi')=[\tilde x]-r'[z']$. It follows that $res(\xi'+r\xi)=1$ and $\partial(\xi'+r\xi)=[\tilde x]-r'[z]$. 

By the commutativity of the following diagram we get the result.
$$\begin{xy} 
  \xymatrix{
   & C_2(X',-1) \ar[d]_{} \ar[ddl]_{} \ar[rr] & & C_1(X_0',0) \ar[d]_{} \ar[ddl]_{} \\
   & Z_1(X')/n \ar[ddl]_{}  \ar[rr] & & Z_0(X_0')/n \ar[ddl]_{} \\
   C_2(X,-1) \ar[d]_{} \ar[rr]^{} & & C_1(X_0,0) \ar[d]_{} &   \\
    Z_1(X)/n \ar[rr]^{} & & Z_0(X_0)/n 
  }
\end{xy} $$
The commutativity of the diagram follows from \cite[Sec. 4]{Ro} since all the maps in question are defined in terms of the 'four basic maps' which are compatible. \qed

\begin{corollary} The restriction map $$res^{CH}:CH^d(X,1)_{\Lambda}\rightarrow CH^d(X_0,1)_{\Lambda}$$ defined in the introduction is surjective.  \end{corollary}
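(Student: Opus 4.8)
The plan is to deduce the corollary from Proposition~\ref{Asurj} by identifying the Rost-theoretic picture of Section~\ref{mainresult} with the corresponding one for Bloch's higher Chow groups. Concretely, I will produce natural isomorphisms
$$CH^d(X,1)_{\Lambda}\ \xrightarrow{\simeq}\ A_2(X,-1),\qquad CH^d(X_0,1)_{\Lambda}\ \xrightarrow{\simeq}\ A_1(X_0,0)$$
fitting into a commutative square whose top arrow is $res^{CH}$ and whose bottom arrow is the restriction map $res:A_2(X,-1)\to A_1(X_0,0)$ of Section~\ref{mainresult}. Since both vertical maps are isomorphisms, the surjectivity of $res$ established in Proposition~\ref{Asurj} then forces $res^{CH}$ to be surjective.

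For the two isomorphisms, recall that for a regular scheme $Y$ which is smooth over $\O_K$ of relative dimension $d$, or smooth of dimension $d$ over $k$, the $\Lambda$-coefficient Rost complex of Section~\ref{mainresult} is, after the obvious reindexing, the mod~$n$ Gersten--Cousin complex of the Milnor K-theory sheaf $\mathcal K^M_d$. By Nesterenko--Suslin--Totaro over a field and by Geisser's description of motivic cohomology over Dedekind rings in general, $\mathcal K^M_d/n$ is the top cohomology sheaf $\mathcal H^d(\Z/n(d))$ of the mod~$n$ motivic complex of \cite{Sp}; moreover the hypothesis $1/n\in k$ together with the Beilinson--Lichtenbaum/Bloch--Kato theorem ensures that $\Z/n(d)$ sits in cohomological degrees $\le d$ on such $Y$, and the row $q=d$ of the coniveau spectral sequence
$$E_2^{p,q}=H^p(Y,\mathcal H^q(\Z/n(d)))\ \Longrightarrow\ CH^d(Y,2d-p-q)_{\Lambda}$$
is exactly this mod~$n$ Rost complex. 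Using that the Bloch--Ogus Gersten resolution of $\mathcal H^q(\Z/n(d))$ is concentrated in cohomological degrees $\le q$, one checks that the only contribution surviving to $H^{2d-1}(Y,\Z/n(d))$ is $E_2^{d-1,d}=H^{d-1}(Y,\mathcal H^d(\Z/n(d)))$: the remaining possibilities $E_2^{d,d-1}$ and, when $\dim Y=d+1$, $E_2^{d+1,d-2}$ vanish, and there are no nonzero differentials into or out of $E^{d-1,d}$. For $Y=X_0$ this last group is $A_1(X_0,0)$ and for $Y=X$ it is $A_2(X,-1)$; since cdh- and Zariski-motivic cohomology agree for the smooth $X_0$, the right-hand group is also the group $H^{2d-1}_{cdh}(X_0,\Z/n\Z)$ appearing in Conjecture~\ref{conjkew}.

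It remains to match the two restriction maps under these identifications. Both $res^{CH}$ and $res$ are the composite of pullback to the generic fibre, multiplication by the class $-\pi\in K^{\times}=CH^1(K,1)$, and the localization boundary $\partial$; and the comparison above is part of an isomorphism of Bloch--Ogus theories --- equivalently, it comes from the identification of Rost's cycle modules with modules over the motivic cohomology spectrum --- so it is automatically compatible with flat pullback, with the product by a unit (which on either side is just the product in Milnor K-theory, the pairing of \cite[Sec.~5]{Bl} restricting to Rost's), and with the localization boundary (Bloch's boundary coincides with the tame symbol). These are the same compatibilities of \cite[Sec.~4]{Ro} that were invoked at the end of the proof of Key lemma~\ref{keylemma1}. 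I expect this last step --- in particular the matching of Bloch's localization boundary with the tame symbol, and the degeneration of the coniveau spectral sequence along the antidiagonal $p+q=2d-1$ --- to be the principal technical point; both ingredients are available in the literature, so the corollary follows.
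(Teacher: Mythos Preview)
Your proposal is correct and follows the same overall strategy as the paper: identify $CH^d(X,1)_\Lambda\cong A_2(X,-1)$ and $CH^d(X_0,1)_\Lambda\cong A_1(X_0,0)$, check that $res^{CH}$ corresponds to $res$, and conclude via Proposition~\ref{Asurj}. The only difference is in how the identification is carried out. The paper works directly with Bloch's $E_1$ niveau spectral sequence $E_1^{p,q}=\bigoplus_{x\in Y^{(p)}}CH^{d-p}(k(x),-p-q)\Rightarrow CH^d(Y,-p-q)$ (extended to $X/\mathcal{O}_K$ using Levine's localization) and needs as inputs only Nesterenko--Suslin--Totaro and the elementary vanishing $CH^r(\text{Spec}\,k(x),j)=0$ for $r>j$. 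You instead pass through the hypercohomology/$E_2$ coniveau picture and invoke Beilinson--Lichtenbaum, Geisser's results over Dedekind rings, and the Gersten resolution; note in particular that the truncation statement ``$\Z/n(d)$ sits in degrees $\le d$'' already follows from the dimension-count vanishing just mentioned, so Beilinson--Lichtenbaum is not actually needed here. Both routes are valid; the paper's is more economical, while yours makes the Bloch--Ogus formalism governing the compatibility of $res$ and $res^{CH}$ more explicit.
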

\begin{proof} We first show that the homology of the sequence
$$\oplus_{x\in X_0^{(d-2)}}K_{2}^Mk(x)\rightarrow \oplus_{x\in X_0^{(d-1)}}K_{1}^Mk(x)\rightarrow \oplus_{x\in X_0^{(d)}}K_{0}^Mk(x)$$
is isomorphic to $CH^d(X_0,1)$ which implies that $A_1(X_0,0)\cong CH^d(X_0,1)_{\Lambda}$. This follows from the spectral sequence 
\begin{equation}\label{niveauspectralseq}
E^{p,q}_1=\oplus_{x\in X_0^{(p)}}CH^{r-p}(\text{Spec}k(x),-p-q)\Rightarrow CH^r(X_0,-p-q)
\end{equation} 
(see \cite[Sec. 10]{Bl}) for $r=d=\text{dim} X_0$, the fact that $CH^r(k(x),r)\cong K^M_r(k(x))$ and the vanishing of $CH^{r}(\text{Spec}k(x),j)$ for $r>j$.

Using a limit argument and the localization sequence for schemes over a regular noetherian base $B$ of dimension one constructed in \cite{Le2}, we also get the existence of spectral sequence (\ref{niveauspectralseq}) for $X/\mathcal{O}_K$. Now for the same reasons as above this spectral sequence implies that the homology of
$$\oplus_{x\in X^{(d-2)}}K_{2}^Mk(x)\rightarrow \oplus_{x\in X^{(d-1)}}K_{1}^Mk(x)\rightarrow \oplus_{x\in X^{(d)}}K_{0}^Mk(x)$$
is isomorphic to $CH^d(X,1)$ which implies that $A_2(X,-1)\cong CH^d(X,1)_{\Lambda}$. 

The result now follows from proposition \ref{Asurj} and the compatibility of $res$ and $res^{CH}$. 
\end{proof}

\begin{remark}
The isomorphism $A_1(X_0,0)\cong CH^d(X_0,1)_{\Lambda}$ also follows from the isomphism $CH(X,1)\cong H^{p-1}(X,\mathcal{K}_p)$ for $p\geq 0$ and $\mathcal{K}_p$ the K-theory sheaf (see f.e. \cite[Cor. 5.3]{M}).
\end{remark}

\section{Remarks on the injectivity of res}\label{remarks}
In this section we prove the injectivity of the restriction map for $d=2$ in our case and remark on imlications of the conjectured injectivity.
\begin{conj}\label{propinj} The map $res:A_2(X,-1)\rightarrow A_1(X_0,0)$ is injective. \end{conj}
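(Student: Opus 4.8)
The plan is to reduce Conjecture \ref{propinj} to the corresponding injectivity on higher Chow groups and then push it into \'etale cohomology, where the good reduction hypothesis makes the relevant maps isomorphisms. By the identifications $A_2(X,-1)\cong CH^d(X,1)_\Lambda$ and $A_1(X_0,0)\cong CH^d(X_0,1)_\Lambda$ established above and the compatibility of $res$ with $res^{CH}$, the conjecture is equivalent to the injectivity of $res^{CH}\colon CH^d(X,1)_\Lambda\to CH^d(X_0,1)_\Lambda$; I would prove this for $d=2$. Here $CH^d(\,\cdot\,,m)_\Lambda=H^{2d-m}_{\mathrm{mot}}(\,\cdot\,,\mathbb Z/n(d))$, and I write $j\colon X_K\hookrightarrow X$, $i\colon X_0\hookrightarrow X$, and $c$ for the cycle class map into \'etale cohomology with $\mu_n^{\otimes\bullet}$-coefficients, which for $d=2$ reads $c_Y\colon CH^2(Y,1)_\Lambda\to H^3_{\mathrm{et}}(Y,\mu_n^{\otimes 2})$ for $Y\in\{X,X_K,X_0\}$. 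The three steps are: (a) $j^*\colon CH^2(X,1)_\Lambda\to CH^2(X_K,1)_\Lambda$ is injective; (b) hence $c_X$ is injective; (c) hence $res^{CH}$ is injective.

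For (a): by the localisation sequence for higher Chow groups over $\mathcal O_K$ \cite{Le2}, the kernel of $j^*$ is the image of the Gysin map $g\colon H^1_{\mathrm{mot}}(X_0,\mathbb Z/n(1))\to H^3_{\mathrm{mot}}(X,\mathbb Z/n(2))$, and $g$ vanishes as soon as the preceding boundary $\partial\colon H^2_{\mathrm{mot}}(X_K,\mathbb Z/n(2))\to H^1_{\mathrm{mot}}(X_0,\mathbb Z/n(1))$ is surjective. The two groups here lie in the Beilinson--Lichtenbaum range, so via the \'etale realisation $\partial$ is identified with the \'etale residue $H^2_{\mathrm{et}}(X_K,\mu_n^{\otimes 2})\to H^1_{\mathrm{et}}(X_0,\mu_n)$, whose cokernel is isomorphic to the image of the \'etale Gysin $i^{\mathrm{et}}_*\colon H^1_{\mathrm{et}}(X_0,\mu_n)\to H^3_{\mathrm{et}}(X,\mu_n^{\otimes 2})$. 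This last map is zero: the normal bundle of $X_0$ in $X$ is $\mathcal O_X(X_0)|_{X_0}\cong\mathcal O_{X_0}$, since $X_0=\mathrm{div}(\pi)$ with $\pi$ a global function on $X$, so $i^*_{\mathrm{et}}\circ i^{\mathrm{et}}_*$ is cup product with $c_1=0$, and $i^*_{\mathrm{et}}\colon H^3_{\mathrm{et}}(X,\mu_n^{\otimes 2})\to H^3_{\mathrm{et}}(X_0,\mu_n^{\otimes 2})$ is injective, being an isomorphism by proper base change ($\mathcal O_K$ henselian, $X$ proper over it). Hence $\partial$ is surjective, $g=0$, and $j^*$ is injective.

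For (b): $c_{X_K}\colon CH^2(X_K,1)_\Lambda\to H^3_{\mathrm{et}}(X_K,\mu_n^{\otimes 2})$ is injective by the Beilinson--Lichtenbaum comparison (a consequence of the Bloch--Kato conjecture): $CH^2(X_K,1)_\Lambda$ maps to $H^3_{\mathrm{mot}}(X_K,\mathbb Z/n(2))$ and the latter injects into $H^3_{\mathrm{et}}$ because $3=2+1$ is exactly the boundary degree for the motivic complex $\mathbb Z/n(2)$ (this is the sole point where $d\le 2$ is needed). Combined with (a) and the naturality $j^*_{\mathrm{et}}\circ c_X=c_{X_K}\circ j^*$, this forces $c_X$ to be injective. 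For (c): $c$ is compatible with products and with localisation boundary maps, so it intertwines $sp_\pi$ with the \'etale specialisation $sp^{\mathrm{et}}_\pi\colon H^3_{\mathrm{et}}(X_K,\mu_n^{\otimes 2})\xrightarrow{\cup[-\pi]}H^4_{\mathrm{et}}(X_K,\mu_n^{\otimes 3})\xrightarrow{\partial}H^3_{\mathrm{et}}(X_0,\mu_n^{\otimes 2})$; moreover $sp^{\mathrm{et}}_\pi\circ j^*_{\mathrm{et}}=i^*_{\mathrm{et}}$, by the $H^\bullet_{\mathrm{et}}(X,\mu_n^{\otimes\bullet})$-linearity of the purity sequence of $X_0\subset X$ and $\partial([-\pi])=\mathrm{ord}_{X_0}(-\pi)=1$ (again using smoothness of $X$ over $\mathcal O_K$). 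Therefore $c_{X_0}\circ res^{CH}=i^*_{\mathrm{et}}\circ c_X$; since $i^*_{\mathrm{et}}$ and $c_X$ are injective, so is $res^{CH}$, proving the conjecture for $d=2$.

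The step I expect to be the main obstacle is the cycle-class bookkeeping underlying (b) and (c): one must set up a cycle class map $CH^2(X,1)_\Lambda\to H^3_{\mathrm{et}}(X,\mu_n^{\otimes 2})$ with good functoriality for the \emph{arithmetic} scheme $X$ over $\mathcal O_K$, invoking motivic cohomology over Dedekind rings \cite{Sp} and the Geisser--Levine-type comparison with \'etale cohomology, including the compatibility of the two localisation triangles and of cycle classes with products and residues, and verify that it intertwines $sp_\pi$ and $sp^{\mathrm{et}}_\pi$. The proper base change and Beilinson--Lichtenbaum inputs over $K$ and $k$ are then formal. Note that the restriction $d\le 2$ is genuine: for $d\ge 3$ the group $CH^d(X,1)_\Lambda=H^{2d-1}_{\mathrm{mot}}(X,\mathbb Z/n(d))$ sits strictly above the boundary degree, the cycle class map into \'etale cohomology need not be injective, and this approach gives nothing, which is why Conjecture \ref{propinj} is only a conjecture in general.
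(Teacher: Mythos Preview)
Your argument for $d=2$ is essentially correct, and the endpoint is the same as the paper's---injectivity of the cycle class/edge maps into \'etale cohomology together with proper base change for $i^*_{\mathrm{et}}$---but the route you take is substantially more roundabout. The paper does not pass through $X_K$ at all: it simply runs the coniveau spectral sequence
\[
E_1^{p,q}=\coprod_{x\in Y^{(p)}}H^{q-p}\bigl(k(x),\Lambda(2-p)\bigr)\ \Longrightarrow\ H^{p+q}_{\mathrm{et}}\bigl(Y,\Lambda(2)\bigr)
\]
once for $Y=X$ (using absolute purity) and once for $Y=X_0$, identifies the $q=2$ row with the Gersten complex via the Merkurjev--Suslin isomorphism $K^M_r/n\cong H^r(\,\cdot\,,\mu_n^{\otimes r})$ for $r\le 2$, and reads off \emph{directly} that $A_2(X,-1)=E_2^{1,2}=E_\infty^{1,2}\hookrightarrow H^3_{\mathrm{et}}(X,\Lambda(2))$ and $A_1(X_0,0)\hookrightarrow H^3_{\mathrm{et}}(X_0,\Lambda(2))$ for purely dimensional reasons ($E_1^{p,q}=0$ for $q<p$ and for $p$ beyond the dimension). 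Compatibility of $res$ with the spectral sequence and proper base change then finish the proof in two lines.

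By contrast, your step~(a) (injectivity of $j^*$) is an extra lemma which the paper never needs, and proving it forces you to invoke the motivic localisation triangle over a Dedekind base, its comparison with the \'etale one, the self-intersection formula $i^*i_*=\cup\,c_1(N)$, and Beilinson--Lichtenbaum in the boundary degree $i=j+1$ (hence the full Bloch--Kato/Voevodsky input), whereas the paper only uses the $K^M_2$ case of norm residue. Your step~(c) likewise has to check by hand that $c$ intertwines $sp_\pi$ with $sp_\pi^{\mathrm{et}}$; in the paper this compatibility is automatic, since $res$ is literally a map of coniveau spectral sequences. A minor slip: you write $CH^d(\,\cdot\,,m)_\Lambda=H^{2d-m}_{\mathrm{mot}}(\,\cdot\,,\mathbb Z/n(d))$, but in the paper $(\,\cdot\,)_\Lambda$ means $\otimes\,\mathbb Z/n$, so one only has an inclusion $CH^d(\,\cdot\,,1)_\Lambda\hookrightarrow H^{2d-1}_{\mathrm{mot}}(\,\cdot\,,\mathbb Z/n(d))$; this does not damage your argument, since only injectivity is used. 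In short: your proof works and is conceptually interesting (it isolates exactly where good reduction enters, via the triviality of $N_{X_0/X}$), but the paper's coniveau argument is shorter, needs lighter input, and sidesteps all the bookkeeping you flag as the ``main obstacle''. Finally, note that the statement as labelled is a \emph{conjecture} for arbitrary $d$; neither the paper nor your proposal addresses $d\ge 3$, and your closing remark that the \'etale comparison fails there is exactly why the paper leaves it open.
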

\begin{proposition}\label{propinj2} Conjecture \ref{propinj} holds for $X/\mathcal{O}_K$ of relative dimension $2$. 
\end{proposition}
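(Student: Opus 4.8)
The plan is to mirror the surjectivity argument but run it "in the other direction", using the same relative Picard group $\mathrm{Pic}(X,X_0)$ as the engine. We want to show that if $\xi \in A_2(X,-1)$ maps to $0$ in $A_1(X_0,0)$, then $\xi = 0$. Lifting to cycles, $\xi$ is represented by $\alpha \in Z_1(X)/n$ with $\partial$-preimage in $C_2(X,-1)$, i.e. a relative $1$-cycle whose specialisation $res(\alpha) \in Z_0(X_0)/n$ is a boundary in $C_1(X_0,0)$; since $res$ on the bottom row is Fulton's specialisation of cycles and $X_0$ is a principal Cartier divisor, the point is to show $\alpha$ itself is a boundary in $C_2(X,-1)$, i.e. rationally equivalent to zero relative to $X_0$ and modulo $n$.

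First I would set up the comparison $A_2(X,-1) \cong CH^2(X,1)_\Lambda$ from the niveau spectral sequence (already established in the excerpt for arbitrary $d$), and similarly $A_1(X_0,0)\cong CH^2(X_0,1)_\Lambda$; in relative dimension $2$ these are the interesting "middle" higher Chow groups. The key input is the exact sequence (\ref{eq1}) and the two Claims proved in the key lemma: $\mathrm{Pic}(X,X_0)\cong Div(X,X_0)(X)/\mathcal{M}^*_{X;X_0}(X)$ and, crucially, $\mathrm{Pic}(X,X_0)$ is uniquely $n$-divisible. Running the long exact cohomology sequence of (\ref{eq1}) and comparing with the analogous sequence on $X_0$, one obtains that the relative divisor class group is $n$-divisible with no $n$-torsion. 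I would then translate the hypothesis "$res(\alpha)$ bounds on $X_0$" into a statement about the class of $\alpha$ in $Div(X,X_0)(X)/\mathcal{M}^*_{X;X_0}(X)$, using that a $1$-cycle on a relative surface $X/\mathcal O_K$ which is vertical-free can be viewed as a relative Cartier divisor; the unique $n$-divisibility of $\mathrm{Pic}(X,X_0)$ forces the class of $\alpha$ modulo $n$ to vanish, hence $\alpha$ is a relative principal divisor modulo $n$, i.e. $\partial(\xi') = \alpha$ for some $\xi' \in C_2(X,-1)$ with $res(\xi')=0$. By the key lemma (applied with $\xi = \alpha$) one can adjust so that the chosen preimage lies in $\ker[C_2(X,-1)\xrightarrow{res}C_1(X_0,0)]$, giving $\xi = 0$ in $A_2(X,-1)$.

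The main obstacle I anticipate is the bookkeeping between "cycles modulo rational equivalence" and "relative Cartier divisors": on a regular relative surface a prime $1$-cycle need not be a Cartier divisor globally, so I would need to either work locally and patch, or pass to the projective bundle $X' = X\times_{\mathcal O_K}\mathbb{P}^N$ as in the key lemma and use the Bertini-type Lemma \ref{subschemes} to replace $\alpha$ by a cycle cut out by a complete intersection (hence genuinely Cartier) before invoking the $n$-divisibility of $\mathrm{Pic}(X',X_0')$; then push forward along $pr_X$. A second, more technical point is checking that the horizontal specialisation map on the bottom row has exactly the kernel predicted by $\mathrm{Pic}(X,X_0)/n = 0$ together with the corresponding statement for $X_0$ — this is where the hypothesis $d=2$ is really used, since for higher $d$ the niveau spectral sequence has more nonzero differentials and the clean identification with a Picard group breaks down. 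I expect that once the relative-divisor dictionary is in place, the unique $n$-divisibility of $\mathrm{Pic}(X,X_0)$ does all the work, exactly as it did for surjectivity.
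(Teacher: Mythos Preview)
Your proposal rests on a misidentification of the object in play. An element of $A_2(X,-1)=H_2(C_*(X,-1))$ is represented by a class in
\[
Z_2(X,-1)\ \subset\ C_2(X,-1)=\bigoplus_{x\in X_{(2)}}K_1^M k(x)\otimes\Lambda,
\]
that is, by a tuple of units in the function fields of the \emph{two}-dimensional closed subschemes of $X$ whose tame symbols cancel; it is \emph{not} represented by a $1$-cycle $\alpha\in Z_1(X)/n$. Consequently, showing that $\xi$ vanishes in $A_2(X,-1)$ means exhibiting it as a boundary from $C_3(X,-1)=K_2^M K(X)\otimes\Lambda$, and this is what your argument must produce. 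The relative Picard group $\mathrm{Pic}(X,X_0)$ and its unique $n$-divisibility control the passage $C_2(X,-1)\to Z_1(X)/n$ (this is precisely how the key lemma works for surjectivity), but they say nothing about the differential $C_3(X,-1)\to C_2(X,-1)$, which involves $K_2^M$ rather than divisors. So the ``engine'' you propose cannot drive the injectivity statement, and the sketch as written proves neither the intended statement nor a correct reformulation of it.

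The paper's argument is of a completely different nature: it is cohomological. One uses the coniveau spectral sequence in \'etale cohomology together with absolute purity to identify the $E_1$-terms with Galois cohomology of residue fields; the norm residue isomorphism in degrees $\leq 2$ (Merkurjev--Suslin) then matches these with the Milnor $K$-groups appearing in Rost's complex. For $d=2$ the relevant edge maps give injections $A_2(X,-1)\hookrightarrow H^3_{\text{\'et}}(X,\Lambda(2))$ and $A_1(X_0,0)\hookrightarrow H^3_{\text{\'et}}(X_0,\Lambda(2))$, and proper base change makes the bottom arrow of the resulting square an isomorphism, forcing $res$ to be injective. The point is that the hard input is an \emph{external} comparison (proper base change for \'etale cohomology), not an internal divisor-theoretic one; the restriction $d=2$ enters only to guarantee that the edge maps are injective for dimension reasons and that Merkurjev--Suslin suffices.
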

\begin{proof}
Let $\Lambda:=\mathbb{Z}/n$ and $\Lambda(q):=\mu_n^{\otimes q}$. We use the coniveau spectral sequence
$$E_1^{p,q}(X,\Lambda(c))=\coprod_{x\in X^p}H^{p+q}_x(X,\Lambda(c))\Rightarrow H_{\text{\'et}}^{p+q}(X,\Lambda(c)),$$ where $H_x^*$ is \'etale cohomology with support in $x$. 

Cohomological purity  (respectively absolute purity) gives isomorphisms $H^{p+q}_x(X,\Lambda(c))\cong H^{q-p}(k(x),\Lambda(c-p))$ which lets us write the above spectral sequence in the following form:
$$E_1^{p,q}(X,\Lambda(c))=\coprod_{x\in X^p}H^{q-p}(k(x),\Lambda(c-p))\Rightarrow H_{\text{\'et}}^{p+q}(X,\Lambda(c)).$$
For more details see for example \cite{CHK}. Writing out this spectral sequence for $X$ and $X_0$ respectively and using the norm residue isomorphism $K^M_n(k)/m\cong H^n(k,\mu_m^{\otimes n})$ for $n\leq 2$ (see \cite{MS}), we get injective edge morphisms $A_2(X,-1)\hookrightarrow H^3_{\text{\'et}}(X,\Lambda(2))$ and $A_1(X_0,-1)\hookrightarrow H^3_{\text{\'et}}(X_0,\Lambda(2))$ for dimensional reasons. The restriction map induces a map between these spectral sequences and therefore a commutative diagram

$$\begin{xy} 
  \xymatrix{
  A_2(X,-1) \ar[r]^{} \ar@{^{(}->}[d]_{} & A_1(X_0,0)  \ar@{^{(}->}[d]^{}  \\
     H^3_{\text{\'et}}(X,\Lambda(2)) \ar[r]^{\cong}  & H^3_{\text{\'et}}(X_0,\Lambda(2))  
  }
\end{xy} $$ 
whose lower horizontal morphism is an isomorphism by proper base change. It follows that $A_2(X,-1)\rightarrow A_1(X_0,0)$ is injective. 
\end{proof}

\begin{remark}\label{remarkinj} The injectivity of $res$ would have implications for a finiteness conjecture on the $n$-torsion of $CH_0(X_K)$ for $X_K$ a smooth scheme over a $p$-adic field with finite residue field and good reduction (see for example \cite{Co}). More precisely, using the coniveau spectral sequence, we can see that the group $A_1(X_K,0)$ is isomorphic to $H^{2d-1}_{Zar}(X_K,\mathbb{Z}/n(d))$ and therefore surjects onto $CH_0(X_K)[n]$. Furthermore it fits into the exact sequence (see \cite[Sec. 5]{Ro})
$$A_2(X,-1)\rightarrow A_1(X_K,0)\rightarrow A_1(X_0,-1)\cong CH_1(X_0)/n.$$
Now conjecture \ref{propinj} implies that there is a sequence of injections $A_2(X,-1)\hookrightarrow A_1(X_0,0)\hookrightarrow H^{2d-1}_{\text{\'et}}(X_0,\mathbb{Z}/n(d))$ into the finite group $H^{2d-1}_{\text{\'et}}(X_0,\mathbb{Z}/n(d))$. Note that the second injection follows from the Kato conjectures. More precisely, there is an exact sequence 
$$KH_3(X_0,\Z/n\Z)\r A_1(X_0,0)\cong CH^d(X_0,1)_{\Lambda}\r H^{2d-1}_{\text{\'et}}(X_0,\mathbb{Z}/n(d))$$
(see \cite[Lem. 6.2]{JS}) and the Kato homology group $KH_3(X_0,\Z/n\Z)$ is zero due to the Kato concectures (see \cite{KS}).
Therefore the finiteness of $CH_0(X_K)[n]$ would depend on the finiteness of $CH_1(X_0)/n.$

In the case of relative dimension $2$ the finiteness of $CH_1(X_0)/n\cong \text{Pic}(X_0)/n$ can be shown using the injection $\text{Pic}(X_0)/n\hookrightarrow H^2_{\text{\'et}}(X_0,\mu_n)$ and the finiteness of $H^2_{\text{\'et}}(X_0,\mu_n)$ (see f.e. \cite[VI.2.8]{Mi}). Therefore proposition \ref{propinj2} implies in particular the finiteness of $CH_0(X_K)[n]$ for $X_K$ a smooth surface over a $p$-adic field with finite residue field and good reduction which is a well-known result by Bloch (see f.e. \cite[Thm. 3.3.2]{Co2}).
\end{remark}
\begin{remark} In the light of remark \ref{remarkinj} and the base change conjecture for higher zero-cycles stated in the introduction one might ask if $$CH^d(X_K,i)[n]$$ is finite for all $i\geq 0$ for smooth schemes over $p$-adic fields.
\end{remark}

\bibliographystyle{plain}

\end{document}